\begin{document}
\title{ \bf On the Image Conjecture}
\author{Arno van den Essen, David Wright, and Wenhua Zhao}



 \input amssym.def
    \input amssym
    \theoremstyle{definition} 
    \newtheorem{rema}{Remark}[section]
    \newtheorem{questions}[rema]{Questions}
    \newtheorem{assertion}[rema]{Assertion}
    \theoremstyle{plain} 
    \newtheorem{propo}[rema]{Proposition}
    \newtheorem{theo}[rema]{Theorem} 
        \newtheorem{claim}[rema]{Claim} 
    \newtheorem{conj}[rema]{Conjecture}
    \newtheorem{quest}[rema]{Question} 
    \theoremstyle{definition}
    \newtheorem{defi}[rema]{Definition}
    \theoremstyle{plain}
    \newtheorem{lemma}[rema]{Lemma} 
    \newtheorem{corol}[rema]{Corollary}
    \newtheorem{exam}[rema]{Example} 
    \newtheorem{rmk}[rema]{Remark}
    \newcommand{\del}{\triangledown}
    \newcommand{\nno}{\nonumber} 
    \newcommand{\lbar}{\big\vert}
    \newcommand{\mbar}{\mbox{\large $\vert$}} 
    \newcommand{\p}{\partial}
    \newcommand{\dps}{\displaystyle} 
    \newcommand{\bra}{\langle}
    \newcommand{\ket}{\rangle} 
    \newcommand{\kr}{\mbox{\rm Ker}\ }
    \newcommand{\res}{\mbox{\rm Res}} 
    \renewcommand{\hom}{\mbox{\rm Hom}}
    \newcommand{\pf}{{\it Proof:}\hspace{2ex}}
    \newcommand{\epf}{\hspace{2em}$\Box$}
    \newcommand{\epfv}{\hspace{1em}$\Box$\vspace{1em}}
    \newcommand{\nord}{\mbox{\scriptsize ${\circ\atop\circ}$}}
    \newcommand{\wt}{\mbox{\rm wt}\ } 
    \newcommand{\clr}{\mbox{\rm clr}\ }
    \newcommand{\ideg}{\mbox{\rm Ideg}\ } 
    \newcommand{\gC}{{\mathfrak g}_{\mathbb C}}
    \newcommand{\hatC}{\widehat {\mathbb C}} 
    \newcommand{\bZ}{{\mathbb Z}} 
    \newcommand{\bQ}{{\mathbb Q}}
    \newcommand{\bR}{{\mathbb R}} 
    \newcommand{\bN}{{\mathbb N}}
    \newcommand{\bT}{{\mathbb T}} 
    \newcommand{\fg}{{\mathfrak g}}
    \newcommand{\fgC}{{\mathfrak g}_{\bC}} 
    \newcommand{\cD}{\mathcal D} 
    \newcommand{\cP}{\mathcal P}
    \newcommand{\cC}{\mathcal C} 
    \newcommand{\cS}{\mathcal S}
     \newcommand{\cL}{\mathcal L}
    \newcommand{\EGC}{{\cal E}(\GC)}
    \newcommand{\cLGC}{\widetilde{L}_{an}\GC}
    \newcommand{\LGC}{{L}_{an}\GC} 
    \newcommand{\BQ}{\begin{eqnarray}}
    \newcommand{\EQ}{\end{eqnarray}} 
    \newcommand{\BQn}{\begin{eqnarray*}}
    \newcommand{\EQn}{\end{eqnarray*}} 
    \newcommand{\wtilde}{\widetilde}
    \newcommand{\Hol}{\mbox{Hol}} 
    \newcommand{\Hom}{\mbox{Hom}}
    \newcommand{\poly}{polynomial } 
    \newcommand{\polys}{polynomials }
    \newcommand{\pz}{\frac{\p}{\p z}} 
    \newcommand{\pzi}{\frac{\p}{\p z_i}}
    \newcommand{\edge}{\text{\raisebox{2.75pt}{\makebox[20pt][s]{\hrulefill}}}}
    \newcommand{\halfedge}{\text{\raisebox{2.75pt}{\makebox[10pt][s]{\hrulefill}}}}
    \newcommand{\n}{\notag}
    \newcommand{\C}{\mathbb C}
    \newcommand{\A}{\mathcal A}
    \newcommand{\Q}{\mathbb Q}
    \newcommand{\R}{\mathbb R}
    \newcommand{\X}{X_1,\ldots,X_n}
    \newcommand{\pa}{\partial}
    \newcommand{\D}{D_1,\ldots,D_n}
    \newcommand{\Del}{\text{\raisebox{2.1pt}{$\bigtriangledown$}}}
    \newcommand{\La}{\triangle}
    \newcommand{\Hess}{\text{\rm Hess}}
\newcommand{\F}{F_{1},\ldots,F_{n}}
\newcommand{\G}{G_{1},\ldots,G_{n}}
\newcommand{\GA}{\text{GA}}
\newcommand{\SA}{\text{SA}}
\newcommand{\MA}{\text{MA}}
\newcommand{\GL}{\text{GL}}
\newcommand{\SL}{\text{SL}}
\newcommand{\GE}{\text{GE}}
\newcommand{\Tr}{\text{Tr}}
\newcommand{\Af}{\text{Af}}
\newcommand{\Bf}{\text{Bf}}
\newcommand{\W}{\text{W}}
\newcommand{\EA}{\text{EA}}
\newcommand{\BA}{\text{BA}}
\newcommand{\E}{\text{E}}
\newcommand{\B}{\text{B}}
\newcommand{\T}{\text{T}}
\newcommand{\Di}{\text{D}}
\newcommand{\degr}{\text{deg}\,}
\newcommand{\supp}{\text{supp}\,}
\newcommand{\Z}{\mathbb Z}
\newcommand{\ideaala}{\mathfrak{a}}
\newcommand{\ideaalb}{\mathfrak{b}}
\newcommand{\ideaalc}{\mathfrak{c}}
\newcommand{\m}{\mathfrak{m}}
\newcommand{\id}{\text{id}}

\setcounter{equation}{0}
\setcounter{rema}{0}

\addtolength{\oddsidemargin}{.65cm}
\addtolength{\evensidemargin}{-.65cm}

\begin{abstract}
The Image Conjecture was formulated by the third author, who showed that it implied his Vanishing Conjecture, which is equivalent to the famous Jacobian Conjecture. We prove various cases of the Image Conjecture and show that how it leads to another fascinating and elusive assertion that we here dub the Factorial Conjecture. Various cases of the Factorial Conjecture are proved.
\end{abstract}

\subjclass[2000]{Primary: 14R15, 13N10: Secondary: 13A99, 13F20}
\keywords{Mathieu subspace, Jacobian Conjecture, Vanishing Conjecture, Image Conjecture, regular sequence}
\thanks{The research of the third author was partially supported by NSA Grant H98230-10-1-0168}
    \maketitle

\section{Introduction} 

The notion of a Mathieu subspace was introduced by coauthor Wenhua Zhao in \cite{Z6}, inspired by a conjecture of Olivier Mathieu (\cite{Mat}), which was shown by Mathieu to imply the famed Jacobian Conjecture.   The third author then formulated the Image Conjecture (Conjecture \ref{ImConj}) upon noticing the resemblance of Mathieu's conjecture with his own Vanishing Conjecture, which he had shown to be equivalent to the Jacobian Conjecture (\cite{Z7}).  He proved that the Image Conjecture, for characteristic zero, implies the Vanishing Conjecture.  This connection makes the Image Conjecture a matter of intrigue.  The reader is referred to \cite{Essen} for more details on this story.

We begin by defining a Mathieu subspace.  Let $k$ be a field and $A$ a commutative $k$-algebra.  Consider the following two conditions relating to a $k$-vector subspace $\mathcal M$ of $A$ and an element $f$ of $A$:

\begin{equation}\text{$f^m\in\mathcal M$ for all $m\ge1$,}\tag{M1} \end{equation}

\noindent and

\begin{equation}\text{ for any $g\in A$, we have $f^mg\in\mathcal M$ for $m\gg0$.}\tag{M2} \end{equation}
We will refer to these conditions by their labels (M1) and (M2) throughout this paper.

\begin{defi} A sub-$k$-vector space $\mathcal M$ of $A$ is called a {\it Mathieu subspace} if, for all $f\in A$, (M1) implies (M2).
\end{defi}

It is not difficult to verify that in the definition of Mathieu subspace the condition (M1) can be replaced by
\begin{equation}\text{$f^m\in\mathcal M$ for all $m\gg0$,}\tag{M1$'$}\end{equation}
and although (M1) appeared in the original definition of Mathieu subspace given in \cite{Z6}, the authors have of late been stating the definition using (M1$'$), for the purpose of comparison with the definition of an ideal.  A proof of the equivalence of the two definitions has been given in Proposition 2.1 of \cite{Z8}.

We list some basic facts about Mathieu subspaces, which we leave to the reader to verify:
\begin{enumerate}
\item $A$ and $\{0\}$ are Mathieu subspaces.
\item\label{contains1} If $\mathcal M$ is a Mathieu subspace and $1\in\mathcal M$, then $\mathcal M=A$.
\item Any ideal in $A$ is a Mathieu subspace.
\item The sum $\mathcal M +\mathcal N$ of two Mathieu subspaces is not necessarily a Mathieu subspace.  (Hint: Use basic facts 2 and 3.  Or, see Example 4.12  
in \cite{Z6}.)
\end{enumerate}

In the next section we will state the Image Conjecture, for which the notion of a Mathieu subspace is needed, and prove some special cases.  Before we proceed, one more definition is in order.

\begin{defi}\label{Ldef}  For any ring $A$ and variables $z_1,\ldots, z_n$, let $\cL:A[z_1,\ldots,z_n]\to A$ be the $A$-linear map defined by $\cL(z^i)=i!$ (meaning $\cL(z_1^{\ell_1}\cdots z_n^{\ell_n})=\ell_1!\cdots \ell_n!$).
\end{defi}

Many of the results surrounding the conjecture involve this curious map $\cL$, which will be at the heart of the Factorial Conjecture, introduced and discussed in Section \ref{FactConj}.

\section{The Image Conjecture} 

The Image Conjecture, formulated by the third author in \cite{Z5}\footnote{The formulation in \cite{Z5} assumes $A$ is a $\Q$-algebra; however it is more general in its assumption about $\mathcal D$.  See Conjecture 1.3 in \cite{Z5}.}, goes as follows:
\begin{conj}[Image Conjecture]\label{ImConj} Let $k$ be a field and $A$ be a $k$-algebra, and let $B=A[z_1,\ldots,z_n]$ be the polynomial ring in $n$ variables over $A$.  For $a_1,\ldots,a_n\in A$ a regular sequence, the image of the $A$-linear map $B^n\to B$ defined by $\mathcal D=(\partial_{z_1}-a_1,\ldots,\partial_{z_n}-a_n)$ is a Mathieu subspace in $B$.
\end{conj}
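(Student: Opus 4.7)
The plan is to identify $\text{Im}(\mathcal{D})$ as the kernel of an explicit $A$-linear functional, then reduce the Mathieu property to an analogous assertion for the factorial map $\cL$ of Definition~\ref{Ldef}.

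The basic computation in $B/\text{Im}(\mathcal{D})$ is that applying $(\partial_{z_i}-a_i)$ to $z_i^{k+1}$ gives $a_i z_i^{k+1}\equiv(k+1)z_i^k$, and iterating across all variables yields
\[
a^{\alpha}z^{\alpha}\equiv\alpha!\pmod{\text{Im}(\mathcal{D})},\qquad a^{\alpha}:=\prod_i a_i^{\alpha_i},\ \ \alpha!:=\prod_i\alpha_i!,
\]
while $(\partial_{z_i}-a_i)(1)=-a_i$ forces $a_i\equiv0$. These congruences naturally produce an $A$-linear functional $\Phi\colon B\to H^n_{(a_1,\ldots,a_n)}(A)$ into local cohomology, sending $z^\alpha$ to the class of $\alpha!/(a_1^{\alpha_1+1}\cdots a_n^{\alpha_n+1})$; one checks directly that $\Phi\circ(\partial_{z_i}-a_i)=0$, so $\text{Im}(\mathcal{D})\subseteq\ker\Phi$.

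The key technical step is the reverse inclusion, where the regular-sequence hypothesis enters. The operators $\mathcal{D}_i=\partial_{z_i}-a_i$ pairwise commute (they differ from the commuting $\partial_{z_i}$'s by elements of $A$) and so form a Koszul complex on $B$ whose top differential is $\mathcal{D}$. A comparison with the Koszul complex of $(a_1,\ldots,a_n)$ on $A$ should, under regularity, concentrate the cohomology in a single degree and identify $B/\text{Im}(\mathcal{D})$ with the image of $\Phi$. Granting this, the Mathieu property translates to the statement: if $\Phi(f^m)=0$ for all $m\ge1$, then $\Phi(f^mg)=0$ for every $g\in B$ and all $m\gg0$. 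Since $\Phi$ is essentially $\cL$ after the rescaling $z_i\mapsto z_i/a_i$ (indeed $\cL(z^\alpha/a^\alpha)=\alpha!/a^\alpha$) together with a shift, this reduces the Image Conjecture to an analogous Mathieu-style statement for $\cL$---the Factorial Conjecture of Section~\ref{FactConj}.

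The principal obstacle is this last reduction. Because $\ker\cL$ is not an ideal, no quotient-ring argument is available. Tractable cases worth attacking first are: $n=1$, where the integral representation $\cL(f)=\int_0^\infty f(z)e^{-z}\,dz$ puts classical Laplace-transform machinery at our disposal; low-degree or homogeneous $f$, where $\cL(f^m)$ admits combinatorial evaluation; and $A$ a polynomial ring in the $a_i$ over $\Q$, where the target of $\Phi$ is fully explicit. The general conjecture appears to require genuinely new ideas.
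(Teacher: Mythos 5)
This is a conjecture, not a theorem; the paper does not prove it and neither do you, so what is at stake is whether your framework is sound and whether it matches the paper's partial results. Your overall philosophy---realize $\mathrm{Im}\,\mathcal D$ as the kernel of a functional and reduce the Mathieu property to a Factorial-Conjecture-type statement for $\cL$---does echo the paper's treatment of the Special Image Conjecture, where Theorem \ref{im=ker} gives $\mathrm{Im}\,\mathcal D = \ker\mathcal E$ and the subsequent discussion funnels into Conjecture \ref{numbers}. Your congruence $a^\alpha z^\alpha \equiv \alpha! \pmod{\mathrm{Im}\,\mathcal D}$ and the $n=1$ description of $\ker\Phi$ are consistent with the paper's Proposition \ref{Lprop}, whose condition (\ref{apowers}) is exactly $\Phi(f)=0$ in $A_a/A$.

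There are, however, two real gaps. First, the identification $\mathrm{Im}\,\mathcal D = \ker\Phi$ is asserted (``a comparison with the Koszul complex\ldots should\ldots''), not proved, and it actually \emph{fails} when $k$ has positive characteristic: take $n=1$, $A=\mathbb F_p[a]$, and $f=z^p$. Then $\Phi(z^p)=p!/a^{p+1}=0$, so $z^p\in\ker\Phi$, yet $z^p\notin\mathrm{Im}\,\mathcal D$ since by Lemma \ref{crucial} the top coefficient of any element of the image must lie in $\mathfrak a=(a)$, and here the top coefficient is $1$. The paper proves the conjecture in characteristic $p$ (Theorem \ref{charp}) by a completely different route, exploiting $\partial_i^p=0$ together with a Koszul-type lemma about leading forms; your $\Phi$ never enters. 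Second, even over $\Q$-algebras the reduction to the Factorial Conjecture is not just ``a rescaling together with a shift.'' The paper's one-variable result, Theorem \ref{onevar}, must assume that $Aa$ is a \emph{radical} ideal precisely because the reduction requires choosing a prime $\mathcal P\supseteq Aa$ not containing a certain coefficient and passing to the fraction field of $A/\mathcal P$ before Theorem \ref{n=1} can be invoked; the general case without the radical hypothesis is left open. And the Factorial Conjecture itself remains unproven, so your chain of reductions cannot close. In short: your framing is a reasonable way to motivate the Factorial Conjecture and matches the paper's viewpoint in Section \ref{FactConj}, but as a proof strategy it skips the hard steps and is incorrect in positive characteristic.
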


We will begin by showing the Image Conjecture is true when $k$ has positive characteristic.  We are most interested, though, in the case when $k$ has characteristic zero, from which the Jacobian Conjecture would follow.  For the characteristic zero case we have only a partial result for $n=1$ (Theorem \ref{onevar} below); beyond that the Image Conjecture remains a mystery.

\begin{theo}\label{charp} Let $A$ be an $\mathbb F_p$-algebra, and let $B=A[z_1,\ldots,z_n]$ be the polynomial ring in $n$ variables over $A$.  For $a_1,\ldots,a_n\in A$ a regular sequence, the image of the $A$-linear map $B^n\to B$ defined by $\mathcal D=(\partial_{z_1}-a_1,\ldots,\partial_{z_n}-a_n)$ is a Mathieu subspace in $B$.
\end{theo}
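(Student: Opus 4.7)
My plan is a three-step reduction, whose technical heart is a Gorenstein/Koszul annihilator computation.

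\textbf{Reduction to the truncated base ring.} In characteristic $p$, because $\partial_{z_i}$ and $a_i \in A$ commute and $\partial_{z_i}^p = 0$ as an operator on polynomials, we have $\mathcal{D}_i^p = (\partial_{z_i}-a_i)^p = (-1)^p a_i^p$. Hence $a_i^p b = (-1)^p \mathcal{D}_i(\mathcal{D}_i^{p-1}b) \in \mathcal{M}$ for every $b \in B$, so $J := (a_1^p,\ldots,a_n^p)B \subseteq \mathcal{M}$. A subspace containing an ideal is Mathieu if and only if its image in the quotient is Mathieu, so it suffices to prove $\mathcal{M}' := \mathcal{M}/J$ is Mathieu in $B' := A'[z_1,\ldots,z_n]$, where $A' := A/(a_1^p,\ldots,a_n^p)$ and every $a_i$ is nilpotent.

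\textbf{Identifying $\mathcal{M}'$ via an exponential conjugation.} Since $a_i^p = 0$ in $A'$, the truncated exponential
\[
E \;:=\; \prod_{i=1}^n \sum_{k=0}^{p-1}\frac{(a_iz_i)^k}{k!}
\]
is a well-defined unit in $B'$, with inverse obtained by replacing each $a_i$ with $-a_i$. A direct computation shows $\partial_{z_i}(E) = a_iE$ in $B'$, giving the operator identity $(\partial_{z_i}-a_i)\circ M_E = M_E\circ\partial_{z_i}$, where $M_E$ denotes multiplication by $E$. Therefore $\mathcal{M}' = E\cdot I'$, where $I' := \sum_i \partial_{z_i}(B')$. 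A short monomial computation shows that $\partial^P := \partial_{z_1}^{p-1}\cdots\partial_{z_n}^{p-1}$ extracts precisely the ``fully bad'' coefficients (those of $z^\alpha$ with every $\alpha_i\equiv p-1\pmod p$), so $\ker \partial^P = I'$, and conjugation yields the key identification
\[
\mathcal{M}' \;=\; \ker \mathcal{D}^P, \qquad \mathcal{D}^P := \prod_{i=1}^n (\partial_{z_i}-a_i)^{p-1}.
\]

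\textbf{Conclusion via Frobenius and the Gorenstein socle.} Suppose $f = \sum_\beta f_\beta z^\beta \in B'$ satisfies $f^m\in \mathcal{M}'$ for all $m\geq 1$. For $m=p^k$ with $k\geq 1$, Frobenius gives $f^{p^k} = \sum_\beta f_\beta^{p^k} z^{p^k\beta}$, so $\partial_{z_i}(f^{p^k}) = 0$ and $\mathcal{D}_i(f^{p^k}) = -a_if^{p^k}$; iterating yields $\mathcal{D}^P(f^{p^k}) = (-1)^{n(p-1)}a^P f^{p^k}$, with $a^P := a_1^{p-1}\cdots a_n^{p-1}$. Since $f^{p^k}\in\ker\mathcal{D}^P$, comparing coefficients of $z^{p^k\beta}$ gives $a^P\cdot f_\beta^{p^k}\in(a_1^p,\ldots,a_n^p)$ in $A$ for every $\beta$ and every $k\geq 1$. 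Now the regular-sequence hypothesis makes $A/(a^p)$ a complete intersection, hence Gorenstein with one-dimensional socle generated by the class of $a^P$; the standard identity $\mathrm{Ann}_{A/(a^p)}(a^P) = (a_1,\ldots,a_n)/(a^p)$ gives $f_\beta^{p^k}\in(a_1,\ldots,a_n)$ for every $k\geq 1$, whence $f_\beta$ is nilpotent in $A'$: if $f_\beta^{p^k}=\sum c_ia_i$ then $f_\beta^{p^{k+1}}=(\sum c_ia_i)^p=\sum c_i^pa_i^p\in(a_1^p,\ldots,a_n^p)$. With only finitely many $f_\beta$ nonzero, $f$ itself is nilpotent in $B'$, so $f^m=0$
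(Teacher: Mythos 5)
Your proof is correct, and it takes a genuinely different route from the paper. The paper works directly in $B$: it shows (Lemma 2.4) that the top-degree coefficients of any element of $\mathrm{Im}\,\mathcal D$ lie in $\mathfrak a=(a_1,\ldots,a_n)$, by an explicit degree-by-degree induction using the Koszul syzygies; then Frobenius gives $c_r^p\in\mathfrak a$, hence $c_r^{p^2}\in(a_1^p,\ldots,a_n^p)$, and the observation $a_i^pz^r=(\partial_i-a_i)^pz^r\in\mathrm{Im}\,\mathcal D$ finishes it. You instead pass to the quotient $B'=B/(a_1^p,\ldots,a_n^p)B$, which is a legitimate reduction since $(a_1^p,\ldots,a_n^p)B\subseteq\mathrm{Im}\,\mathcal D$ and a vector subspace containing an ideal is Mathieu iff its image in the quotient is. Then the truncated-exponential conjugation converts $\mathrm{Im}\,\mathcal D'$ from an image into a kernel, $\mathrm{Im}\,\mathcal D'=\ker\mathcal D^P$, which is a clean structural identity the paper never makes explicit. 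Frobenius then forces $a^P f_\beta^{p^k}=0$ in $A'$, and the colon-ideal identity yields nilpotence of $f$ in $B'$ outright, a stronger conclusion than the paper's. Each approach buys something: yours is more conceptual and isolates a precise description of $\mathrm{Im}\,\mathcal D'$, while the paper's stays entirely at the level of the single elementary fact about Koszul syzygies (Lemma 2.3) and avoids any deeper input from complete-intersection theory.

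Two points deserve attention. First, the phrase ``Gorenstein with one-dimensional socle'' is loose here: $A'$ is in general neither local nor Artinian, so ``socle'' in the usual sense is not defined. What you actually use is the colon-ideal identity
\[
\bigl(a_1^{p},\ldots,a_n^{p}\bigr):\bigl(a_1^{p-1}\cdots a_n^{p-1}\bigr)=(a_1,\ldots,a_n),
\]
valid for any regular sequence $a_1,\ldots,a_n$ (and more generally $(a_1^{e_1},\ldots,a_n^{e_n}):(a_1^{e_1-1}\cdots a_n^{e_n-1})=(a_1,\ldots,a_n)$). This is a classical fact, provable from the exactness of the Koszul complex via the chain map $K_\bullet(\underline a)\to K_\bullet(\underline a^p)$ whose degree-zero component is multiplication by $a_1^{p-1}\cdots a_n^{p-1}$, but it is a genuinely deeper statement than the paper's Lemma 2.3 (which only uses first syzygies of $\underline a$), and you should either prove it or cite it explicitly. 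Second, your last sentence is cut off, but the missing step is trivial: once $f$ is nilpotent in $B'$, $f^m=0$ for $m\gg0$, so $f^mg=0\in\mathcal M'$ for every $g\in B'$ and all large $m$, establishing (M2).
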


\begin{rema}The theorem fails if we drop the hypothesis that $a_1,\ldots,a_n$ forms a regular sequence.  This can be seen in the case $n=1$, $A=\mathbb F_p$ (or any field of characteristic $p$), and $a_1=0$. In that case  $1=\partial_zz\in\text{Im}\,\mathcal D$, but $z^{p-1}\notin\text{Im}\,\mathcal D$, so $\text{Im}\,\mathcal D$ is not a Mathieu subspace by item \ref{contains1} in the Introduction.  (This is Example 2.7 in \cite{Z5}).  \end{rema}

Before proving Theorem \ref{charp} we need some preliminary results, the first of which is a well-known fact about regular sequences.

\begin{lemma}\label{koszul} Let $A$ be a ring and let $a_1,\ldots,a_n$ be a regular sequence $A$.  If $g_1,\ldots,g_n\in A$ are such that $\sum_{i=0}^{n}a_i g_i=0$, then for each pair $(i,j)$ with $1\le i,j\le n$ and $i\ne j$ there exists an element $g_{ij}\in A$ such that $g_{ij}=-g_{ji}$ for each pair and $g_i=\sum_{j\ne i}g_{ij}a_j$.
\end{lemma}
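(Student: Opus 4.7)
The plan is to argue by induction on $n$. This is essentially the statement that the first Koszul homology of a regular sequence vanishes, so the first syzygies of $a_1,\ldots,a_n$ are generated by the ``trivial'' Koszul relations $a_i e_j - a_j e_i$; I would prove it directly rather than invoking that machinery. The base case $n=1$ is vacuous: if $a_1 g_1 = 0$ and $a_1$ is a nonzerodivisor, then $g_1=0$, and there are no off-diagonal pairs $(i,j)$, so the empty sum equals $g_1$.

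For the inductive step, suppose the result holds for regular sequences of length $n-1$, and assume $\sum_{i=1}^n a_i g_i = 0$. Rewriting this as $a_n g_n = -\sum_{i=1}^{n-1} a_i g_i$, we see that $a_n g_n \in (a_1,\ldots,a_{n-1})$. Since $a_1,\ldots,a_n$ is a regular sequence, $a_n$ is a nonzerodivisor modulo $(a_1,\ldots,a_{n-1})$, so this forces $g_n \in (a_1,\ldots,a_{n-1})$, say $g_n = \sum_{j=1}^{n-1} a_j h_j$ for some $h_j \in A$. Substituting and collecting terms gives $\sum_{i=1}^{n-1} a_i (g_i + a_n h_i) = 0$, a relation on the shorter regular sequence $a_1,\ldots,a_{n-1}$.

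The induction hypothesis then produces elements $h_{ij} \in A$ (for $1 \le i,j \le n-1$, $i\ne j$) with $h_{ij} = -h_{ji}$ such that $g_i + a_n h_i = \sum_{j \ne i,\, j \le n-1} h_{ij} a_j$ for each $i \le n-1$. Define $g_{ij} := h_{ij}$ when $i,j \le n-1$, and $g_{in} := -h_i$, $g_{ni} := h_i$ for $i \le n-1$. The skew-symmetry is immediate, and a direct check shows $\sum_{j \ne i} g_{ij} a_j = g_i$ for $i \le n-1$, while for $i=n$ one gets $\sum_{j=1}^{n-1} h_j a_j = g_n$ by construction.

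The only real step that uses regularity is extracting $g_n \in (a_1,\ldots,a_{n-1})$ from $a_n g_n \in (a_1,\ldots,a_{n-1})$; everything else is bookkeeping, so I do not anticipate a serious obstacle. The one subtlety to watch is that the index $n$ plays a distinguished role in my argument while the statement is symmetric in $1,\ldots,n$, but this is harmless because the regular-sequence hypothesis lets us single out $a_n$ without loss of generality within the induction.
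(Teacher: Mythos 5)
Your proof is correct. The paper simply cites the exactness of the Koszul complex for a regular sequence (pointing to Matsumura, \S 18.D) and leaves it at that, whereas you give a self-contained inductive argument. The two are the same underlying mathematics — your induction is precisely the standard proof that the first Koszul homology $H_1(a_1,\ldots,a_n;A)$ vanishes for a regular sequence — but your version has the advantage of being elementary and not requiring the reader to unwind the definition of the Koszul complex or look up a reference; the paper's version is shorter and places the lemma in its proper conceptual home. One small point worth making explicit in your write-up: you use that $a_1,\ldots,a_{n-1}$ is again a regular sequence, which is immediate from the definition (an initial segment of a regular sequence is a regular sequence), but stating it keeps the induction airtight. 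The base case and the verification of skew-symmetry and of the identities $g_i=\sum_{j\ne i}g_{ij}a_j$ for $i\le n-1$ and for $i=n$ all check out.
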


\begin{proof} This follows from the exactness of the Koszul complex for the sequence $(a_1,\ldots,a_n)$ (see \cite{Mats}, \S18.D).
\end{proof}

For the rest of this section $A$, $B$, $a_1,\ldots,a_n$, and $\mathcal D$ will be as in Theorem \ref{charp}, and $\mathfrak a$ will denote the ideal $Aa_1+\cdots+Aa_n$ of $A$.  We will write $z^r$ for the monomial $z_1^{r_1}\cdots z_n^{r_n}$.  For the very next result $A$ does not need to be an $\mathbb F_p$-algebra.

\begin{lemma}\label{crucial}  Let $g\in B=A[z]$ be of degree $d$, with $g_d$ its degree $d$ homogeneous summand.  If $g\in\mathrm{Im}\,\cD$, then all coefficients of $g_d$ belong to the ideal $\mathfrak a$.
\end{lemma}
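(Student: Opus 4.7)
The plan is to fix an arbitrary representation $g=\sum_{i=1}^n(\partial_{z_i}-a_i)(h_i)$ and argue by induction on $D:=\max_i\deg h_i$. Since $(\partial_{z_i}-a_i)(h_i)$ has degree at most $\deg h_i$, we have $D\ge\deg g=d$. In the base case $D=d$, the derivative parts $\partial_{z_i}h_i$ all drop to degree $\le d-1$, so the degree-$d$ homogeneous component of $g$ is simply $g_d=-\sum_i a_i h_{i,d}$; every coefficient of this expression is an $A$-linear combination of $a_1,\dots,a_n$, hence lies in $\mathfrak{a}$, which is what we want.

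For the inductive step, assume $D>d$. Comparing degree-$D$ parts of both sides of $g=\sum_i(\partial_{z_i}-a_i)(h_i)$ yields $\sum_i a_i h_{i,D}=0$ in $B$. Reading this monomial by monomial, for each $r$ with $|r|=D$ the coefficients $c_{i,r}$ of $z^r$ in $h_i$ satisfy $\sum_i a_ic_{i,r}=0$ in $A$. Applying Lemma \ref{koszul} to each such relation produces antisymmetric elements $g_{ij}^{(r)}\in A$ (i.e.\ $g_{ij}^{(r)}=-g_{ji}^{(r)}$) with $c_{i,r}=\sum_{j\ne i}g_{ij}^{(r)}a_j$. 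Collecting them into homogeneous polynomials $H_{ij}:=\sum_{|r|=D}g_{ij}^{(r)}z^r$ gives $H_{ij}=-H_{ji}$ and $h_{i,D}=\sum_{j\ne i}H_{ij}a_j$.

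Next I would modify the representation by setting $h_i':=h_i+\sum_{j\ne i}(\partial_{z_j}-a_j)(H_{ij})$. Writing $P_i:=\partial_{z_i}-a_i$ and using that the $P_i$ pairwise commute (they act on disjoint variables with coefficients in $A$), the correction contributes $\sum_{i\ne j}P_iP_jH_{ij}=\sum_{i<j}P_iP_j(H_{ij}+H_{ji})=0$ to the image, so $\sum_i P_i(h_i')=g$ still holds. On the other hand, the degree-$D$ part of $h_i'$ is $h_{i,D}+\sum_{j\ne i}(-a_jH_{ij})=h_{i,D}-\sum_{j\ne i}a_jH_{ij}=0$, so $\max_i\deg h_i'<D$. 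Applying the inductive hypothesis to the new representation completes the reduction to the base case.

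The step I expect to need the most care is the verification that the Koszul-style correction preserves the image of $\mathcal{D}$: this is a ``Koszul homotopy'' trick that relies simultaneously on the antisymmetry delivered by Lemma \ref{koszul} and on the commutativity of the first-order operators $P_i$. Once that identity is in place, everything else is degree bookkeeping.
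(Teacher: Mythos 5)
Your proof is correct, and it reorganizes the paper's argument in a cleaner, more conceptual way. The paper also starts from $\sum_i a_i h_{i,e}=0$ in top degree $e$ and applies Lemma~\ref{koszul} (in $B$, rather than monomial-by-monomial in $A$, but these are equivalent), then proceeds by a downward induction constructing explicit homogeneous correction terms $p_{ij,m}$ for $d+1\le m\le e$ satisfying $h_{i,m}=\sum_{j\ne i}(p_{ij,m}a_j-\partial_{z_j}p_{ij,m+1})$, and finally reads the original identity in degree $d$ to extract $g_d=-\sum_i a_i\bigl(h_{i,d}+\sum_{j\ne i}\partial_{z_j}p_{ij,d+1}\bigr)$. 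Your version packages the same Koszul data into a single ``homotopy'' step: you replace the representing tuple $(h_i)$ by $(h_i')$ with $h_i'=h_i+\sum_{j\ne i}P_jH_{ij}$, verify that the image is unchanged via $\sum_{i\ne j}P_iP_jH_{ij}=0$ (which uses the antisymmetry $H_{ij}=-H_{ji}$ together with the commutativity $[P_i,P_j]=0$), and observe that the top-degree part of each $h_i'$ cancels. Then you induct on $\max_i\deg h_i$, landing in the base case $D=d$ where $g_d=-\sum_i a_ih_{i,d}$ is manifestly in $\mathfrak a\cdot B$. Your route makes the role of the commuting operators $P_i=\partial_{z_i}-a_i$ explicit (the paper uses only the commutativity $\partial_{z_i}\partial_{z_j}=\partial_{z_j}\partial_{z_i}$, buried in its displayed rearrangement), and it avoids carrying around the whole tower of $p_{ij,m}$'s; the paper's more explicit approach, on the other hand, hands you a formula for $g_d$ at the end. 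Both are sound; yours is a bit tidier to present.
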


\begin{proof} Being in the image of $\mathcal D$, $g$ has the form 
\begin{equation}\label{im}
g=\sum_{i=1}^n(\partial_{z_i}-a_i)h_i
\end{equation} 
for some $h_1,\ldots,h_n\in B$.  For $1\le i\le n$ and any integer $m\ge0$ we will denote by $h_{i,m}$ the degree $m$ homogeneous summand of $h_i$.  Let $e$ be the maximum of the degrees of $h_1,\ldots,h_n$. Since $\text{deg}\, g=d$, it is clear from (\ref{im}) that not all of $h_1,\ldots,h_n$ can have degree strictly less than $d$, so we have $e\ge d$.  If $e=d$ it follows from (\ref{im}) that $g_d=-\sum_{i=1}^na_ih_{i,d}$, and hence that all its coefficients belong to $\mathfrak a$, and we are done.  

If $e>d$ then it follows from (\ref{im}) that $\sum_{i=1}^na_ih_{i,e}=0$.  We appeal to Lemma \ref{koszul}, replacing $A$ with $B$ (which is innocent, since $a_1,\ldots,a_n$ is a regular sequence in $B$ as well), which asserts the existence of polynomials $p_{ij,e}\in B$, for $i\ne j$, such that $p_{ij,e}=-p_{ji,e}$ and $h_{i,e}=\sum_{j\ne i}p_{ij,e}a_j$.  Since each $h_{i,e}$ is homogeneous of degree $e$, we can replace $p_{ij,e}$ by its degree $e$ homogeneous summand and assume $p_{ij,e}$ homogeneous of degree $e$ as well.

More generally, we claim that for $m\ge d+1$ we have, for each pair $i,j$ with $i\ne j$, a polynomial $p_{ij,m}$, homogeneous of degree $m$ and $0$ if $m>e$, such that $p_{ij,m}=-p_{ji,m}$ and
\begin{equation}\label{p}
h_{i,m}=\sum_{j\ne i}(p_{ij,m}a_j-\partial_{z_j}p_{ij,m+1})\,.
\end{equation}
Note that the preceding paragraph established exactly this for $m=e$, with $p_{ij,e+1}=0$ as required.  Suppose inductively that the polynomials have been found for larger values of $m$.  Reading equation (\ref{im}) in degree $m$ gives 
{\allowdisplaybreaks
\begin{align}0&=\sum_{i=1}^n(\partial_{z_i}h_{i,m+1}-a_ih_{i,m})\label{first}\\
&=\sum_{i=1}^n\left(\partial_{z_i}\left(\sum_{j\ne i}(p_{ij,m+1}a_j-\partial_{z_j}p_{ij,m+2})\right)-a_ih_{i,m}\right)\notag\\
&=\sum_{i=1}^n\left(\partial_{z_i}\left(\sum_{j\ne i}p_{ij,m+1}a_j\right)-a_ih_{i,m}\right)-\sum_{i\ne j}\partial_{z_i}\partial_{z_j}p_{ij,m+2}\notag\\
&=\sum_{i=1}^n\left(\partial_{z_i}\left(\sum_{j\ne i}p_{ij,m+1}a_j\right)-a_ih_{i,m}\right)\quad\text{(since $\partial_{z_i}\partial_{z_j}p_{ij,m+2}=-\partial_{z_j}\partial_{z_i}p_{ji,m+2}$)}\notag\\
&=-\sum_{i=1}^na_i\left(h_{i,m}+\sum_{j\ne i}\partial_{z_j}p_{ij,m+1}\right)\quad\text{(this uses $p_{ij,m+1}=-p_{ji,m+1}$)}\,.\label{last}
\end{align}
}
From this equation, Lemma \ref{koszul} provides polynomials $p_{ij,m}\in B$ with $p_{ij,m}=-p_{ji,m}$  such that $h_{i,m}+\sum_{j\ne i}\partial_{z_j}p_{ij,m+1}=\sum_{j\ne i}p_{ij,m}a_j$, which, solving for $h_{i,m}$, yields (\ref{p}).

Finally, we complete the proof by reading (\ref{im}) in degree $d$, which gives $g_d$ as the right side of (\ref{first}) with $m=d$, and hence (following the same reasoning) $g_d$ is equal to (\ref{last}), with $m=d$.  This shows the coefficients of $g_d$ lie in $\mathfrak a$.
\end{proof}

We now will need to assume that $A$ is an $\mathbb F_p$-algebra.

\begin{corol}\label{cp} Let $f=\sum c_rz^r\in B$ with $c_r\in A$.  If $f^p\in\mathrm{Im}\,\cD$, then $c_r^p\in\mathfrak a$ for all $r$.
\end{corol}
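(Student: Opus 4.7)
The plan is to induct on $d = \deg f$, using Lemma \ref{crucial} to pin down the top-degree coefficients of $f^p$ and a characteristic-$p$ identity to peel them off so the induction can recurse. Since $A$ is an $\mathbb{F}_p$-algebra, Frobenius is a ring endomorphism of $B$, so writing $f = f_d + \tilde f$ with $f_d = \sum_{|r|=d} c_r z^r$ the top homogeneous summand and $\deg \tilde f < d$, we get $f^p = f_d^p + \tilde f^p$. The degree-$pd$ homogeneous summand of $f^p$ is then exactly $f_d^p = \sum_{|r|=d} c_r^p z^{pr}$, so applying Lemma \ref{crucial} to $f^p \in \mathrm{Im}\,\cD$ forces $c_r^p \in \mathfrak{a}$ for every $r$ with $|r| = d$.

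The induction step rests on the identity $\cD_i(z^{pr}) = \partial_{z_i}(z^{pr}) - a_i z^{pr} = -a_i z^{pr}$, which holds because $\partial_{z_i}(z^{pr}) = p r_i z^{pr - e_i} = 0$ in characteristic $p$. By $A$-linearity of $\cD$, this gives $\alpha a_i z^{pr} \in \mathrm{Im}\,\cD$ for every $\alpha \in A$, every $i$, and every multi-index $r$. Writing each $c_r^p$ with $|r|=d$ as $\sum_i a_i \alpha_{r,i}$ with $\alpha_{r,i} \in A$, we obtain
\[
f_d^p \;=\; \sum_{i=1}^n a_i \Bigl(\sum_{|r|=d} \alpha_{r,i}\, z^{pr}\Bigr) \;\in\; \mathrm{Im}\,\cD,
\]
and hence $\tilde f^p = f^p - f_d^p \in \mathrm{Im}\,\cD$ as well. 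The inductive hypothesis applied to $\tilde f$ then yields $c_r^p \in \mathfrak{a}$ for all coefficients of $\tilde f$, and the base case $d = 0$ is immediate from Lemma \ref{crucial}, since $f^p$ is then its own top homogeneous summand.

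The only place where characteristic $p$ is essential---and where an attempt to lift the argument to characteristic zero would founder---is the vanishing $\partial_{z_i}(z^{pr}) = 0$ in the identity above: this is precisely what allows $f_d^p$ to be absorbed into $\mathrm{Im}\,\cD$ so that the induction closes. Everything else is formal: Lemma \ref{crucial} does the real work, and Frobenius ensures the top-degree summand of $f^p$ has the clean form $f_d^p$ needed to extract coefficient information.
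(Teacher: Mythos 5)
Your proof is correct and follows essentially the same path as the paper's: both induct on the top homogeneous summand of $f$, use Frobenius to write $f^p$ as a sum of $p$-th powers of homogeneous pieces, apply Lemma \ref{crucial} to the top piece $f_d^p$, and then absorb $f_d^p$ into $\mathrm{Im}\,\cD$ via the characteristic-$p$ vanishing $\partial_{z_i}(z^{pr})=0$ so the induction can recurse. The only cosmetic difference is that the paper inducts on the number of nonzero homogeneous summands rather than on $\deg f$.
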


\begin{proof}  The proof will be by induction on the number $d$ of non-zero homogeneous summands of $f$. Write $f=f_1+\cdots+f_d$ where $f_i$ are non-zero homogeneous summands with $\text{deg}\,f_i<\text{deg}\,f_j$ when $i<j$.  Then $f^p=f_1^p+\cdots+f_d^p$, and since $f^p\in\mathrm{Im}\,\cD$ Lemma \ref{crucial} says that all coefficients of $f_d^p$ belong to $\mathfrak a$, and this proves the case $d=1$.  In any case $f_d^p$ is the sum of monomials of the form $ca_iz^{pr}$ with $c\in A$, $r=(r_1,\ldots,r_n)$, $r_1+\cdots+r_d=\text{deg}\,f_d$.  Since $ca_iz^{pr}=(\partial_i-a_i)(-cz^{rp})\in\text{Im}\,\cD$, it follows that $f_d^p\in\text{Im}\,\cD$, so $f^p-f_d^p=f_1^p+\cdots+f_{d-1}^p\in\text{Im}\,\cD$, and the proof is complete by induction.
\end{proof}

\begin{lemma}\label{firstlemma} For all $r=(r_1,\ldots,r_n)$ we have $a_i^pz^r\in\mathrm{Im}\,\cD$.
\end{lemma}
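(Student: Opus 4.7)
The plan is to exploit the defining relation of $\cD$: for any $f\in B$ and any $i$, the element $(\partial_{z_i}-a_i)(f)=\partial_{z_i}f-a_if$ lies in $\text{Im}\,\cD$, so multiplication by $a_i$ and differentiation $\partial_{z_i}$ agree modulo $\text{Im}\,\cD$. That is, $a_if\equiv\partial_{z_i}f\pmod{\text{Im}\,\cD}$.

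Since $a_i\in A$ does not involve any $z_j$, the operator $\partial_{z_i}$ commutes with multiplication by $a_i$, so iterating the congruence gives
\begin{equation*}
a_i^p f\equiv a_i^{p-1}\partial_{z_i}f\equiv a_i^{p-2}\partial_{z_i}^2 f\equiv\cdots\equiv\partial_{z_i}^p f\pmod{\text{Im}\,\cD}
\end{equation*}
for any $f\in B$. (One can also write this as a single step: $(\partial_{z_i}-a_i)$ divides $\partial_{z_i}^p-a_i^p$ in the Weyl-type algebra generated by $\partial_{z_i}$ and multiplication by $a_i$, since they commute.)

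Apply this with $f=z^r$. In characteristic $p$, $\partial_{z_i}^p(z^r)=0$: if $r_i<p$ the result is already zero after $r_i+1$ differentiations, and if $r_i\ge p$ the coefficient is the falling factorial $r_i(r_i-1)\cdots(r_i-p+1)$, which is a product of $p$ consecutive integers and hence divisible by $p$, i.e.\ zero in the $\mathbb F_p$-algebra $A$. (Equivalently, $x(x-1)\cdots(x-p+1)\equiv x^p-x\pmod p$ as polynomials, and this vanishes on $\mathbb F_p$.) Therefore $a_i^p z^r\equiv 0\pmod{\text{Im}\,\cD}$, which is the desired statement.

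There is no real obstacle here; the only subtlety is the characteristic-$p$ computation of the falling factorial, but the casework is immediate. The substantive content of this lemma is the observation that $\partial_{z_i}$ and multiplication by $a_i$ are congruent modulo $\text{Im}\,\cD$, and this congruence is preserved under composition.
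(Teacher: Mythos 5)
Your proof is correct and takes essentially the same approach as the paper: the paper applies the Frobenius identity $(\partial_{z_i}-a_i)^p = \partial_{z_i}^p + (-a_i)^p$ (valid since $\partial_{z_i}$ and multiplication by $a_i$ commute) together with $\partial_{z_i}^p=0$ on $B$, while you reach the same conclusion by iterating the congruence $a_i g\equiv\partial_{z_i}g\pmod{\mathrm{Im}\,\cD}$ (or, as you note, by factoring $\partial_{z_i}^p-a_i^p$). Both arguments rest on the identical observations --- commutativity of $\partial_{z_i}$ with $a_i$ and vanishing of $\partial_{z_i}^p$ in characteristic $p$ --- so this is the paper's proof in a slightly different packaging.
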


\begin{proof} Since $\partial_i^p=0$ on $B$, we have $(-a_i)^pz^r=(\partial_i-a_i)^pz^r\in\mathrm{Im}\,\cD$.
\end{proof}
With these facts the proof of Theorem \ref{charp} follows quickly.

\begin{proof}[Proof of Theorem \ref{charp}]  We will show, more strongly, that if $f\in B$ with $f^p\in \text{Im}\,\cD$, then for any $g\in B$ we have $f^mg\in\text{Im}\,\cD$ when $m\ge p^2$.  Let $f=\sum c_rz^r$ be such that $f^p\in \text{Im}\,\cD$.  By Corollary \ref{cp} we have $c_r^p\in\mathfrak a$, hence $c_r^{p^2}\in Aa_1^p+\cdots+Aa_n^p$, for all $r$.  Since $f^{p^2}=\sum c_r^{p^2}z^{p^2r}$, it follows that for every $g\in B$ all coefficients of $f^mg$ belong to $Aa_1^p+\cdots+Aa_n^p$ if $m\ge p^2$.  Therefore $f^mg\in\text{Im}\,\cD$ by Lemma \ref{firstlemma}.
\end{proof}

For characteristic zero, the Image Conjecture is not even completely solved in the case $n=1$.  However, the theorem below solves a weak version of this case.  Here $z$ represents only one variable.

\begin{theo}\label{onevar}
If $A$ is a  $\Q$-algebra and if $a\in A$ is a non-zero-divisor such that $Aa$ is a radical ideal, then the image of $\mathcal D=\partial_z-a$ is a Mathieu subspace in $B=A[z]$.
\end{theo}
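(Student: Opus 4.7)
The plan is to characterize those $f$ satisfying the hypothesis $f^m \in \text{Im}\,\mathcal{D}$ for all $m \ge 1$, and then verify the Mathieu conclusion directly from that characterization. My claim is that the hypothesis is equivalent to the condition that every coefficient $c_i$ of $f = \sum c_i z^i$ lies in $Aa^{i+1}$.

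The first step is a monomial criterion: $c z^N \in \text{Im}\,\mathcal{D}$ iff $c \in Aa^{N+1}$. This follows by solving $(\partial_z - a)h = cz^N$ explicitly: the recursion forces $h_0 = -N!\,c/a^{N+1}$, and since $A$ is a $\mathbb{Q}$-algebra $N!$ is a unit, so $h \in B$ requires $c \in Aa^{N+1}$. Packaging additively, a polynomial $g$ of degree $d$ lies in $\text{Im}\,\mathcal{D}$ iff $\cL(\tilde g) \in Aa^{d+1}$, where $\tilde g := a^d g(z/a)$ and $\cL$ is the map of Definition~\ref{Ldef}. The easy direction of the characterization is immediate: if every $c_i \in Aa^{i+1}$, then multinomial expansion puts the $z^k$-coefficient of $f^m$ into $Aa^{k+m}$, so the monomial criterion kills each term of $f^m g$ once $m > \deg g$, giving $f^m g \in \text{Im}\,\mathcal{D}$.

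The hard direction is the main obstacle. Suppose for contradiction some $c_{i_0} \notin Aa^{i_0+1}$; then $\tilde f$ has at least one coefficient outside $Aa^{d+1}$. Let $\mu \le d$ be maximal with every coefficient of $\tilde f$ in $Aa^\mu$, and decompose $\tilde f = a^\mu F + R$ where $R$'s coefficients lie in $Aa^{\mu+1}$ and $F \in A[z]$ has at least one coefficient outside $Aa$. Expanding $\tilde f^m$ and reducing modulo $Aa^{m\mu+1}$ yields $\cL(\tilde f^m) \equiv a^{m\mu}\cL(F^m) \pmod{Aa^{m\mu+1}}$; combined with the hypothesis $\cL(\tilde f^m) \in Aa^{md+1} \subseteq Aa^{m\mu+1}$ (using $\mu \le d$), this forces $\cL(F^m) \in Aa$ for every $m$, i.e., $\bar\cL(\bar F^m) = 0$ in $A/Aa$ for every $m$, where $\bar F$ is the nonzero image of $F$ in $(A/Aa)[z]$. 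Since $Aa$ is radical, $A/Aa$ is reduced and hence embeds into a product of characteristic-zero fields (the fraction fields of its quotients by minimal primes). Over any such field $K$, a nonzero $p \in K[z]$ cannot satisfy $\cL(p^m) = 0$ for every $m$: for $p$ a nonzero constant this is immediate, and for $\deg p \ge 1$ one may pass (via a finitely generated subfield) to $\mathbb{C}$, where the asymptotic $\cL(p^m) \sim a_d^m (m\deg p)!\,e^{a_{d-1}/(d\,a_d)}$ is nonzero for large $m$. Thus $\bar F = 0$, the desired contradiction. The radical hypothesis on $Aa$ is used precisely to reduce the vanishing condition on $(A/Aa)[z]$ to a product of characteristic-zero fields, where the one-variable Factorial-type statement follows from elementary asymptotics.
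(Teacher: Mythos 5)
Your proof follows the same overall strategy as the paper's: characterize $\mathrm{Im}\,\mathcal D$ via the factorial functional (your criterion ``$g\in\mathrm{Im}\,\mathcal D$ iff $\cL(\tilde g)\in Aa^{d+1}$'' with $\tilde g=a^dg(z/a)$ is precisely the content of Proposition~\ref{Lprop}), get the easy direction from the monomial criterion, and in the hard direction use the radical hypothesis to reduce to the one-variable Factorial Conjecture over a field of characteristic zero. But the two technical workhorses are handled differently. Your reduction in the hard direction---peel off the exact power $a^\mu$ from $\tilde f$, deduce $\cL(F^m)\in Aa$ for all $m$ from $\tilde f^m=a^{m\mu}F^m$ and $\cL(\tilde f^m)\in Aa^{md+1}\subseteq Aa^{m\mu+1}$, then embed the reduced ring $A/Aa$ into the product of the fraction fields of its quotients by minimal primes---is actually leaner than the paper's, which first multiplies $f$ by a power of $a$ to normalize $v_a$, invokes a separate deletion lemma (Lemma~\ref{delete}) to prune terms, and then passes to one carefully chosen prime $\mathcal P\supseteq Aa$ avoiding the bottom nonzero coefficient $b_s$. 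Your route avoids the deletion lemma altogether and needs no choice of prime.

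The one genuine gap is the asymptotic $\cL(p^m)\sim a_d^m(md)!\,e^{a_{d-1}/(d a_d)}$, which you assert without proof. It is true: writing $p^m=a_d^m z^{md}q(1/z)^m$ with $q(w)=1+\sum_{i\ge1}(a_{d-i}/a_d)w^i$, one has
\[
\frac{\cL(p^m)}{a_d^m(md)!}=\sum_{j=0}^{md}A_j(m)\,\frac{(md-j)!}{(md)!},\qquad A_j(m)=[w^j]\,q(w)^m,
\]
each summand tends to $(a_{d-1}/(da_d))^j/j!$ for fixed $j$, and a Cauchy estimate $|A_j(m)|\le r^{-j}\max_{|w|=r}|q(w)|^m$ with radius $r\asymp 1/m$ supplies the uniform geometric bound needed to pass the limit through the sum, with the terms $j>md/2$ controlled separately by the super-exponential decay of $(md-j)!/(md)!$. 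None of this is a one-liner, and as written your argument owes the reader the justification. The paper instead proves the one-variable vanishing (Theorem~\ref{n=1}) via Proposition~\ref{monomtimes}, a prime-divisibility argument that forces $p\mid 1$ for all large primes $p$ and is written out in full. In short: your proof is sound and in places more streamlined, modulo one asserted-but-unproved asymptotic whose verification is true but nontrivial.
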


\begin{rema}\label{prerem}  The proof of this theorem will appeal to a result from Section \ref{FactConj}, namely Theorem \ref{n=1}, which says that if $f\in\C[z]$ ($z$ representing one variable) and $\cL(f^m)=0$ for all $m\ge0$, then $f=0$.  An easy use of the Lefschetz principle shows that the same holds replacing $\C$ by an arbitrary field of characteristic zero.
\end{rema}

In the case where $a$ is a unit in $A$ it can be shown rather easily that $\text{Im}\,\mathcal D=B$, hence is a Mathieu subspace.  Just note that $\partial_z-a$ has the inverse map $(\partial_z-a)^{-1}=[-a(1-a^{-1}\partial_z)]^{-1}=-a^{-1}\sum_{i=0}^{\infty}a^{-i}\partial_z^i$, which makes sense because $\partial_z$ is locally nilpotent. 

Therefore we make some preparations in the case $a$ is not a unit, in which case $I=\cap_{i=1}^\infty Aa^i\ne A$.  For $c\in A-I$ there exists a unique integer $m\ge0$ such that $c\in Aa^m-Aa^{m+1}$.  Setting $m=\infty$ if $c\in I$, we call $m$ the $a$-order of $c$ and denote it by $v_a(c)$.  Since $a$ is a non-unit in $B$ as well, $v_a$ extends to elements of $B$ which do not lie in $\cap_{i=1}^\infty Ba^i$. It is clear that an element $f$ of $B$ of the form $cz^i$, then $v_a(f)=v_a(c)$.  

In the following proposition $\mathcal D$ is as in Theorem \ref{onevar}.  Here $A$ can be any commutative ring, not necessarily a $\Q$-algebra.

 \begin{propo}\label{Lprop} Let $a\in A$ be a non-zero-divisor.    Let $f=b_0 +b_1z+\cdots+b_dz^d\in A[z]$.
\begin{enumerate}
\item[i)] If $f\in\text{\rm Im}\,\mathcal D$, then $b_d\equiv0\mod a$ and
\begin{equation}\label{apowers}
d!b_d+(d-1)!b_{d-1}a+(d-2)!b_{d-2}a^2+\cdots+b_0a^d\equiv0\mod a^{d+1}\,.
\end{equation}
\item[ii)] Conversely, let $A$ be either a $\Q$-algebra or an $\mathbb F_p$-algebra such that $d<p$.  If $f$ satisfies {\rm(\ref{apowers})}, then $f\in\text{\rm Im}\,\mathcal D$.
\end{enumerate}
 \end{propo}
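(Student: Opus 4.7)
The plan is to pass between $f$ and a putative preimage $h=c_0+c_1z+\cdots+c_dz^d$ using the simple recursion $b_j=(j+1)c_{j+1}-ac_j$ obtained by matching coefficients of $z^j$ in $f=(\partial_z-a)h$, with the convention $c_{d+1}=0$. Part (i) will package (\ref{apowers}) as a telescoping identity, while part (ii) will invert the recursion using the hypotheses on $A$.

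For part (i), write $f=(\partial_z-a)h$ with $h=\sum_{i=0}^e c_iz^i$. Since $a$ is a non-zero-divisor, the leading term of $(\partial_z-a)h$ is $-ac_ez^e$, forcing $e\le d$ (set $c_i:=0$ for $e<i\le d$). Comparing coefficients of $z^j$ gives $b_j=(j+1)c_{j+1}-ac_j$ for $0\le j\le d$; the $j=d$ case reads $b_d=-ac_d$, so $b_d\equiv 0\mod a$. For (\ref{apowers}), multiply the $j$-th relation by $j!\,a^{d-j}$ to obtain
\[
j!\,b_j\,a^{d-j}=(j+1)!\,c_{j+1}\,a^{d-j}-j!\,c_j\,a^{d-j+1}=T_{j+1}-T_j,
\]
where $T_j:=j!\,c_j\,a^{d-j+1}$. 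Summing over $j=0,1,\ldots,d$ telescopes to $T_{d+1}-T_0=0-c_0\,a^{d+1}=-c_0\,a^{d+1}$, which is $\equiv 0\mod a^{d+1}$.

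For part (ii), I would reverse the construction. Given $f$ satisfying (\ref{apowers}), define elements $c_0,\ldots,c_d\in A$ recursively by $c_d=-b_d/a$ and $c_{i-1}=(i\,c_i-b_{i-1})/a$ for $i=d,d-1,\ldots,1$, or equivalently by the closed form
\[
c_i=-\frac{1}{a^{d-i+1}\,i!}\sum_{j=i}^{d}j!\,b_j\,a^{d-j}.
\]
Under either hypothesis ($\Q$-algebra, or $\mathbb F_p$-algebra with $d<p$) the integer $i!$ is a unit in $A$, so the only real issue is divisibility of $\sum_{j=i}^{d}j!\,b_j\,a^{d-j}$ by $a^{d-i+1}$. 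I would establish this by writing $\sum_{j=i}^{d}=\sum_{j=0}^{d}-\sum_{j=0}^{i-1}$: the first sum is divisible by $a^{d+1}$, hence by $a^{d-i+1}$, by hypothesis (\ref{apowers}), while each term of the second sum already carries a factor $a^{d-j}$ with $d-j\ge d-i+1$. Since $a$ is a non-zero-divisor, the quotient $c_i$ is a well-defined element of $A$, and by construction the $c_i$ satisfy $b_j=(j+1)c_{j+1}-ac_j$ for all $0\le j\le d$, so $h=\sum c_iz^i$ gives $(\partial_z-a)h=f$.

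The main obstacle I anticipate is keeping the bookkeeping transparent in part (ii): namely, observing that the single hypothesis (\ref{apowers}) (which is the $i=0$ instance) already implies the full descending chain of divisibilities needed to define each $c_i$, and that the factorials appearing in (\ref{apowers}) are exactly the denominators that arise when one inverts the recursion---a coincidence made transparent only by the telescoping identity used in part (i).
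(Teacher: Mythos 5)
Your proof is correct. It rests on the same coefficient recursion $b_j=(j+1)c_{j+1}-ac_j$ that the paper's proof uses, but you unwind it differently: the paper does a degree-lowering induction on $d$ (peel off $(\partial_z-a)(c_dz^d)$ and apply the inductive hypothesis) for both directions, whereas you replace that induction with an explicit telescoping identity in (i) and with a closed-form preimage $c_i=-\frac{1}{a^{d-i+1}\,i!}\sum_{j=i}^{d}j!\,b_j\,a^{d-j}$ in (ii). The telescoping computation makes visible the fact that the sum in (\ref{apowers}) is exactly $-c_0a^{d+1}$, and the closed form makes the role of the unit $i!$ (hence of the $\mathbb Q$- or $\mathbb F_p$-with-$d<p$ hypothesis) transparent at every step rather than just at the top degree. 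One minor point worth making explicit when you write this up: the claim that the closed-form $c_i$ satisfy the recursion is an identity in the localization $A_a$; since you have already shown each $c_i\in A$ and $a$ is a non-zero-divisor (so $A\hookrightarrow A_a$), the identity descends to $A$.
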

 \begin{proof} For {\it i) }we can assume $b_d\ne0$.  If $d=0$ the two statements coincide and are easy to prove.  Assume $d\ge1$ and $g\in\text{Im}\,\mathcal D$, so that $f=(\partial_z-a)(c_0+c_1z+\cdots+c_dz^d)$. (Note that the polynomial on the inside must have the same degree as that of $f$, since $a$ is not a zero-divisor.)  In particular $b_d=-ac_d$, establishing the first assertion of {\it i), }and therefore $f-(\partial_z-a)(c_dz^d)=b_0+\cdots+b_{d-2}z^{d-2}+(b_{d-1}-dc_d)z^{d-1}\in\text{Im}\,\mathcal D$.  By induction on $d$ we have $(d-1)!(b_{d-1}-dc_d)+(d-2)!b_{d-2}a+\cdots+b_0a^{d-1}\equiv0 \mod a^d$.  Multiplying by $a$ and using $b_d=-ac_d$ gives (\ref{apowers}).
 
 For {\it ii), }note that the hypothesis and (\ref{apowers}) imply that $b_d=-ac_d$ for some $c_d\in A$.  If $d=0$ all is clear.  If $d\ge1$ we again have $f-(\partial_z-a)(c_dz^d)=b_0+\cdots+b_{d-2}z^{d-2}+(b_{d-1}-dc_d)z^{d-1}$, so $f\in\text{Im}\,\mathcal D$ if and only if $b_0+\cdots+b_{d-2}z^{d-2}+(b_{d-1}-dc_d)z^{d-1}\in\text{Im}\,\mathcal D$.  By induction it suffices to show $(d-1)!(b_{d-1}-dc_d)+(d-2)!b_{d-2}a+\cdots+b_0a^{d-1}\equiv0 \mod a^d$, or equivalently (since $a$ is a non-zero-divisor), that $(d-1)!(b_{d-1}a-dac_d)+(d-2)!b_{d-2}a^2+\cdots+b_0a^d\equiv0 \mod a^{d+1}$.  Since $ac_d=-b_d$, this is precisely the hypothesis.
 \end{proof}
 
 Now we return to our assumption that $A$ is a $\Q$-algebra.

\begin{lemma}\label{monom} An element of $B$ of the form $cz^i$ lies in the image of $\mathcal D$ if and only if $v_a(c)\ge i+1$.
\end{lemma}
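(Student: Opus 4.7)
The plan is to obtain this lemma as an immediate corollary of Proposition \ref{Lprop}, since the monomial $f=cz^i$ is exactly the case where all but one of the coefficients $b_0,\ldots,b_d$ are zero. Specifically, writing $f=cz^i$ as $b_0+b_1z+\cdots+b_dz^d$ with $d=i$, $b_d=c$, and $b_j=0$ for $j<d$, the condition (\ref{apowers}) collapses to the single requirement
\[
i!\cdot c\equiv 0\pmod{a^{i+1}}.
\]
The crucial observation is that since $A$ is a $\Q$-algebra, the integer $i!$ is a unit in $A$, so this condition is equivalent to $c\in Aa^{i+1}$, which by definition means $v_a(c)\ge i+1$.

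For the forward direction, I would assume $cz^i\in\text{Im}\,\mathcal D$ and invoke Proposition \ref{Lprop} (i) to conclude $i!\,c\in Aa^{i+1}$. Multiplying by $(i!)^{-1}\in A$ yields $c\in Aa^{i+1}$, so $v_a(c)\ge i+1$ (which includes the possibility $v_a(c)=\infty$, i.e., $c\in I=\bigcap_{j}Aa^j$). For the backward direction, I would assume $v_a(c)\ge i+1$, so that $c\in Aa^{i+1}$ and hence $i!\,c\in Aa^{i+1}$; condition (\ref{apowers}) is thus satisfied, and Proposition \ref{Lprop} (ii) (applicable since $A$ is a $\Q$-algebra) gives $cz^i\in\text{Im}\,\mathcal D$.

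There is no serious obstacle: once one notices that specializing (\ref{apowers}) to a monomial leaves only the leading term, and that $i!$ is invertible, the equivalence is immediate in both directions. The only subtlety worth flagging is the case $c\in\bigcap_j Aa^j$, where $v_a(c)=\infty$; the argument handles this uniformly because $c\in Aa^{i+1}$ suffices for the backward direction regardless of whether $v_a(c)$ is finite or infinite.
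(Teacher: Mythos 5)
Your proof is correct and follows exactly the route the paper intends: the paper's own proof is the single sentence ``This is immediate from Proposition \ref{Lprop},'' and your write-up simply unpacks why --- specializing (\ref{apowers}) to a monomial leaves only $i!\,c\equiv0\bmod a^{i+1}$, and invertibility of $i!$ in the $\Q$-algebra $A$ makes this equivalent to $v_a(c)\ge i+1$, with part (i) giving the forward implication and part (ii) the converse.
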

\begin{proof}  This is immediate from Proposition \ref{Lprop}.
\end{proof}

\begin{corol}\label{goodf} Let $f=c_0+c_1z+\cdots+c_dz^d\in B$.  If $v_a(c_i)\ge i+1$ for $0\le i\le d$, then for each $g\in B$ we have $gf^m\in\text{\rm{Im}}\,\mathcal D$ for $m\gg0$.
\end{corol}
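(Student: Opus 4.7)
The plan is to reduce the statement to a monomial-by-monomial check using Lemma \ref{monom}, and then verify that the $a$-order of every coefficient of $gf^m$ is large enough once $m$ exceeds a simple bound depending on $\deg g$.

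First I would observe that $\text{Im}\,\mathcal{D}$ is an $A$-submodule of $B$, so it suffices to show that every monomial $c z^i$ appearing in $gf^m$ satisfies $v_a(c)\ge i+1$; Lemma \ref{monom} then puts each such monomial in $\text{Im}\,\mathcal{D}$, and hence their sum. Next I would record the basic multiplicativity: because $a$ is a non-zero-divisor, $x\in Aa^M$ and $y\in Aa^N$ imply $xy\in Aa^{M+N}$, so $v_a$ is superadditive on products. In particular, the hypothesis $v_a(c_i)\ge i+1$ gives
\[
v_a(c_{i_1}c_{i_2}\cdots c_{i_m})\;\ge\; \sum_{s=1}^m (i_s+1)\;=\;m+\sum_{s=1}^m i_s
\]
for any $m$-fold product of coefficients of $f$.

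Then I would expand. Writing $g=\sum_j \gamma_j z^j$ with $\deg g = D$, the coefficient of $z^i$ in $gf^m$ is
\[
\sum_{\substack{j+i_1+\cdots+i_m=i\\ j\le D}} \gamma_j\, c_{i_1}\cdots c_{i_m},
\]
and each summand has $a$-order at least $m+i_1+\cdots+i_m = m+i-j \ge m+i-D$. Since $v_a$ is stable under addition (sums of elements of $Aa^N$ remain in $Aa^N$), the full coefficient of $z^i$ lies in $Aa^{\,m+i-D}$.

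Finally, choosing $m\ge D+1$ makes $m+i-D\ge i+1$, so by Lemma \ref{monom} every monomial $cz^i$ of $gf^m$ belongs to $\text{Im}\,\mathcal{D}$; hence $gf^m\in\text{Im}\,\mathcal{D}$ for all $m\ge \deg g+1$. There is no real obstacle here: the argument is a direct counting of $a$-orders, and the only subtle point is the superadditivity of $v_a$ on products, which relies on $a$ being a non-zero-divisor (a hypothesis already in force throughout the section).
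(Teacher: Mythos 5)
Your proof is correct and takes essentially the same route as the paper: expand $gf^m$, bound the $a$-order of each coefficient of $z^i$ by $m+i-\deg g$, choose $m\ge\deg g+1$, and conclude monomial-by-monomial via Lemma \ref{monom}. You simply spell out the intermediate superadditivity of $v_a$ and the binomial expansion that the paper leaves implicit.
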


\begin{proof} Let $N=\text{deg}\,g$ and let $m\ge N+1$.  Note that each term $cz^j$ in $f^m$ satisfies $v_a(c)\ge j+m$.  Hence each term $cz^j$ of $gf^m$ satisfies $v_a(c)\ge j+m-N\ge j+1$.  By Lemma \ref{monom} each term of $gf^m$, and hence $gf^m$ itself, lies in $\text{\rm{Im}}\,\mathcal D$.
\end{proof}

\begin{lemma}\label{delete} Let $f=c_0+c_1z+\cdots+c_dz^d\in B$ be such that $v_a(c_i)\ge i$ for $0\le i\le d$, and, for some $t\le d$, $v_a(c_t)\ge t+1$.  Let $\tilde f=f-c_tz^t$.  If $f^m\in\text{\rm{Im}}\,\mathcal D$ for some $m\ge 1$, then $\tilde f^m\in\text{\rm{Im}}\,\mathcal D$.
\end{lemma}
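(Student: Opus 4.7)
The plan is to show $f^m - \tilde f^m \in \mathrm{Im}\,\mathcal D$ directly, term by term, after which the conclusion follows from the hypothesis $f^m \in \mathrm{Im}\,\mathcal D$. Since $f = \tilde f + c_t z^t$, the binomial theorem gives
$$f^m - \tilde f^m = \sum_{k=1}^{m}\binom{m}{k}(c_t z^t)^k \tilde f^{m-k},$$
so it suffices to show that every monomial appearing in the right-hand side lies in $\mathrm{Im}\,\mathcal D$.

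By Lemma \ref{monom}, what I need is that every monomial $c' z^N$ arising from the expansion satisfies $v_a(c') \ge N+1$. A typical monomial of $\tilde f^{m-k}$ is a product of $m-k$ coefficients $c_{i_1},\dots,c_{i_{m-k}}$ with each $i_\ell \ne t$, giving $z^j$ with $j = i_1 + \cdots + i_{m-k}$ and coefficient of $a$-order at least $i_1 + \cdots + i_{m-k} = j$ (using $v_a(c_i) \ge i$ for $i\ne t$, together with the submultiplicativity $v_a(xy) \ge v_a(x)+v_a(y)$, which holds because $a$ is a non-zero-divisor). Multiplying by $(c_t z^t)^k$ produces $z^{tk+j}$ with coefficient of $a$-order at least $k(t+1) + j = (tk+j) + k \ge (tk+j)+1$, since $k \ge 1$. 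Thus every such monomial satisfies the hypothesis of Lemma \ref{monom} and therefore lies in $\mathrm{Im}\,\mathcal D$.

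Consequently $f^m - \tilde f^m \in \mathrm{Im}\,\mathcal D$, and since $f^m \in \mathrm{Im}\,\mathcal D$ by assumption, we conclude $\tilde f^m \in \mathrm{Im}\,\mathcal D$. There is no real obstacle here beyond the bookkeeping of $a$-orders; the crucial point is that the extra factor $(c_t z^t)^k$ contributes $k$ powers of $a$ beyond what is needed, which is exactly the ``surplus'' slack required by Lemma \ref{monom}.
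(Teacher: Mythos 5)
Your proof is correct and takes essentially the same approach as the paper: the paper writes $\tilde f^m = f^m + h$ and simply asserts that the terms of $h$ satisfy the hypothesis of Lemma~\ref{monom}, and your binomial expansion with the $a$-order bookkeeping (the extra $k\geq 1$ powers of $a$ coming from the $c_t^k$ factor) is precisely what fills in the ``one easily sees'' in the paper's argument.
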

 \begin{proof} Writing $\tilde f^m=f^m+h$ one easily sees that the terms of $h$ satisfy the hypothesis of Lemma \ref{monom}, and so we have $h\in\text{\rm{Im}}\,\mathcal D$.  Since $f^m\in\text{\rm{Im}}\,\mathcal D$, it follows that $\tilde f^m\in\text{\rm{Im}}\,\mathcal D$.
 \end{proof}

\begin{proof}[Proof of Theorem \ref{onevar}] Let $f=c_0+c_1z+\cdots+c_dz^d\in B$ be such that $f^m\in\text{\rm{Im}}\,\mathcal D$ for all $m\ge1$.  We will show that $v_a(c_i)\ge i+1$ for $0\le i\le d$, which implies $\text{\rm{Im}}\,\mathcal D$ is a Mathieu subspace by virtue of Corollary \ref{goodf}.

Suppose, to the contrary, that $v_a(c_i)\le i$ for some $i$.  Let $t$ be the maximum of the numbers $i-v_a(c_i)$, which, by our assumption is non-negative.  Let $h=a^tf$.  Then for each term $cz^i$ of $h$ we have $v_a(c_i)\ge i$, and equality holds for at least one $i$.  Clearly $h^m\in\text{\rm{Im}}\,\mathcal D$ for all $m\ge1$.  Using Lemma \ref{delete} to remove the terms for which equality does not hold, we arrive at a polynomial $f=c_0+c_1z+\cdots+c_dz^d\in B$ with $f^m\in\text{\rm{Im}}\,\mathcal D$ for all $m\ge1$ having the property that $v_a(c_i)=i$ when $c_i\ne0$.  We have $c_i=a^ib_i$ with $b_i\in A$, and when $b_i\ne0$ we have $b_i\notin Aa$.  Letting $p=\sum b_iz^i$ we then have $f=p(az)$.  

For any $g(z)\in B$, if $g$ has degree $\le N$ for some integer $N\ge0$, it follows from Proposition \ref{Lprop} that $g(az)\in\text{Im}\,\mathcal D$ if and only if $a^N\cL(g)\equiv0\mod a^{N+1}$ ($\cL$ as in Definition \ref{Ldef}).  Noting that $f^m=p^m(az)$ and $\text{deg}\,p^m\le md$ we thereby conclude $a^{md}\cL(p^m)\equiv0\mod a^{md+1}$ for all $m\ge1$.  Since $a$ is not a zero-divisor, we get $\cL(p^m)\equiv0\mod a$ for all $m\ge1$.  Let $s$ be the smallest of all $i$ such that $b_i\ne0$.  Then $b_s\notin Aa$.  We are assuming $Aa$ is a radical ideal, hence it is the intersection of the prime ideals containing it.  Therefore there is a prime ideal $\mathcal P$ in $A$ containing $Aa$ but not containing $b_s$.  Letting $\bar p$ be the image of $p$ in $k[z]$ where $k$ is the fraction field of $A/\mathcal P$, we have $\bar p\ne0$ and $\cL(\bar p^m)=0$.  But this contradicts Theorem \ref{n=1} (see Remark \ref{prerem}).
\end{proof}

\section{Specific version of the Image Conjecture relevant to the Vanishing and Jacobian Conjectures} 

The following specific version of the Image Conjecture, from \cite{Z5}, is of special interest.   For this we let $\xi =(\xi_1,\ldots,\xi_n)$ and $z=(z_1,\ldots,z_n)$ be two sets of commuting indeterminates,  and we consider the commuting operators $\mathcal D_i=\xi_i-\partial_{z_i}$, $1\le i\le n$, on the polynomial ring $A=\C[\xi,z]$.  We consider the map $\mathcal D=(\mathcal D_1,\ldots,\mathcal D_n):A^n\to A$.  

\begin{conj}[Special Image Conjecture]\label{image}  The image of $\mathcal D$ is a Mathieu subspace.
\end{conj}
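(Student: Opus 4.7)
The plan is to reduce the Special Image Conjecture to a multi-variable analogue of the Factorial Conjecture by first identifying the quotient $A/\text{Im}\,\cD$ concretely. I introduce the $\C$-linear map
\[
\phi\colon A\longrightarrow \C[z],\qquad \phi(\xi^\alpha z^\beta)=\partial_z^\alpha(z^\beta)=\tfrac{\beta!}{(\beta-\alpha)!}\,z^{\beta-\alpha}
\]
(zero when $\alpha\not\le\beta$ componentwise). A direct monomial calculation using $\partial_{z_i}\partial_z^\alpha(z^\beta)=\beta_i\,\partial_z^\alpha(z^{\beta-e_i})$ shows $\phi\circ\cD_i=0$, so $\text{Im}\,\cD\subseteq\ker\phi$. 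For the opposite inclusion, the relation $(\xi_i-\partial_{z_i})h\in\text{Im}\,\cD$ yields $\xi_i h\equiv\partial_{z_i}h\pmod{\text{Im}\,\cD}$ for all $h\in A$; iterating on monomials gives $f\equiv\phi(f)\pmod{\text{Im}\,\cD}$ for every $f\in A$. Since Lemma \ref{crucial}, applied with $\mathfrak a=(\xi_1,\ldots,\xi_n)$ and iterated by $z$-degree, forces $\text{Im}\,\cD\cap\C[z]=\{0\}$, we obtain $\ker\phi=\text{Im}\,\cD$ and a natural $\C$-linear isomorphism $A/\text{Im}\,\cD\cong\C[z]$. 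In particular $f^m\in\text{Im}\,\cD$ if and only if $\phi(f^m)=0$.

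With this identification the conjecture becomes: if $\phi(f^m)=0$ for all $m\ge 1$, then $\phi(f^m g)=0$ for every $g\in A$ and $m\gg 0$. I would link this to the Factorial Conjecture through the composite $\cL\circ\phi$, with $\cL$ the map of Definition \ref{Ldef} applied in the $z$-variables; this composite is the $\C$-linear functional $\tilde\cL(\xi^\alpha z^\beta)=\beta!$ when $\alpha\le\beta$ (and $0$ otherwise), and it vanishes on $\text{Im}\,\cD$. The hypothesis thus furnishes an infinite family of factorial identities $\cL(h(z)\phi(f^m))=0$ for every $h\in\C[z]$ and every $m\ge 1$. Following the pattern at the end of the proof of Theorem \ref{onevar}, I would perform a Lefschetz-type descent to a residue field and invoke a multi-variable form of Theorem \ref{n=1} to force $\phi(f)$ into the restricted class handled by an analogue of Corollary \ref{goodf}, from which the desired Mathieu property would follow.

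The hard part is that $\phi$ is not a ring homomorphism: $\phi(f^m)\ne\phi(f)^m$ in general, because $\phi$ encodes the left normal ordering of the Weyl algebra $\C\langle z,\partial_z\rangle$ acting on $\C[z]$, whereas $A=\C[\xi,z]$ is commutative. Consequently the hypothesis ``$\phi(f^m)=0$ for all $m$'' is far richer than the vanishing of powers of a single polynomial under $\cL$, and no direct one-polynomial appeal to the Factorial Conjecture can digest it. A successful proof will require either a strengthened form of the Factorial Conjecture controlling the full sequence $(\phi(f^m))_{m\ge 1}$ simultaneously, or an intermediate device---a Bargmann--Fock style pairing, or a Fourier-type involution exchanging the roles of $\xi$ and $z$---that converts Weyl-ordered powers back into ordinary polynomial powers in some auxiliary ring. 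Combined with the fact that the multi-variable Factorial Conjecture itself remains open, this is the core obstruction that has so far kept Conjecture \ref{image} out of reach despite the positive evidence from Theorems \ref{charp} and \ref{onevar}.
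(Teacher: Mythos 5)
This is a conjecture, not a theorem: the paper does not prove Conjecture \ref{image}, and neither do you --- and, to your credit, you say so explicitly. So the relevant comparison is between your reduction/framing and the paper's, not between proofs.

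Your framing matches the paper's closely. Your map $\phi$ is precisely the map $\mathcal E$ of Theorem \ref{im=ker}, and your derivation of $\mathrm{Im}\,\cD=\ker\phi$ is a legitimate re-proof of that cited result (the step $f\equiv\phi(f)\pmod{\mathrm{Im}\,\cD}$ by iterating $\xi_i h\equiv\partial_{z_i}h$ already gives $\ker\phi\subseteq\mathrm{Im}\,\cD$, so the appeal to Lemma \ref{crucial} to get $\mathrm{Im}\,\cD\cap\C[z]=\{0\}$ is a correct but redundant double-check). Your composite $\cL\circ\phi$ is exactly what the paper exploits: $\mathcal E$ restricted to $\C[U_1,\ldots,U_n]$ (with $U_i=\xi_iz_i$) is the factorial map $\cL$, which is the bridge to Conjecture \ref{numbers}. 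You also correctly flag the central difficulty that $\phi$ is not a ring homomorphism, so $\phi(f^m)\ne\phi(f)^m$, which is why the Factorial Conjecture alone does not dispose of Conjecture \ref{image}.

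Where the paper is more precise than your sketch is in the intermediate reduction. The paper sets up a multi-grading and the Newton polyhedron $N_f$ and uses extremal points (items 5--9 of Section 3) to show exactly which multi-homogeneous summands $z^r q(U)$ of an $f$ satisfying (M1) are genuinely constrained, namely those with $r$ in the closed positive $n$-tant; multiplying by $\xi^r$ lands such a summand in $\C[U]$, and the Factorial Conjecture would then force it to vanish. This gives a sharp statement --- the Factorial Conjecture is a \emph{necessary} consequence of the Image Conjecture, controlling extremal points of $N_f$ --- rather than the looser ``Lefschetz descent plus multi-variable analogue of Theorem \ref{n=1}'' you gesture at, which would require Conjecture \ref{numbers} itself (still open) and even then would not obviously close the Image Conjecture. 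In short: you have correctly reproduced the paper's framework and correctly identified the obstruction, but you should be clearer that the paper only establishes the Factorial Conjecture as a necessary condition, and that the gap between Conjectures \ref{numbers} and \ref{image} remains even if the former is granted.
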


In \cite{Z5} it is shown that the above conjecture implies the Jacobian Conjecture.\footnote{One has to prove the conjecture for all $n\ge1$, which then implies the Jacobian Conjecture for all $n\ge1$.}  More specifically, it is shown that it suffices to show that 

\begin{theo}[\cite{Z5}, Theorem 3.7]  The following two statements are equivalent:
\begin{enumerate}
\item For any $f\in\C[\xi,z]$ of the form $(\xi_1^2+\cdots+\xi_n^2)P$ with $P\in\C[z]$ and $P$ is homogeneous of degree four, then $f^m\in\mathrm{Im}\,\mathcal D$ for all $m\ge1$ implies that, for each $g\in\C[z]$, $f^mg\in\mathrm{Im}\,\mathcal D$ for all $m\gg0$.
\item The Jacobian Conjecture holds in all dimensions $n\ge1$.
\end{enumerate}
\end{theo}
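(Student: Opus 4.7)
My plan is to reduce statement (1) to Zhao's Vanishing Conjecture for the Laplacian, and then to invoke the known equivalence of that Vanishing Conjecture with the Jacobian Conjecture. The crucial step will be an explicit identification of $\text{Im}\,\mathcal D$ as the kernel of a concrete linear map $\pi\colon\C[\xi,z]\to\C[z]$.

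I would define $\pi$ to be the $\C$-linear map given on monomials by $\pi(\xi^\alpha z^\beta)=\partial_z^\alpha(z^\beta)$. Direct monomial checks give $\pi(\xi_i h)=\partial_{z_i}\pi(h)=\pi(\partial_{z_i}h)$ for all $h$, so $\pi\circ\mathcal D_i=0$ and hence $\text{Im}\,\mathcal D\subseteq\ker\pi$. For the reverse inclusion I would induct on $\xi$-degree: for any monomial $c(z)\xi^\alpha$ with $\alpha_i\ge 1$, the identity
\[c(z)\xi^\alpha \;=\; \mathcal D_i\bigl(c(z)\xi^{\alpha-e_i}\bigr) \;+\; (\partial_{z_i}c(z))\,\xi^{\alpha-e_i}\]
reduces the $\xi$-degree modulo $\text{Im}\,\mathcal D$, and iterating gives $c(z)\xi^\alpha\equiv\partial_z^\alpha(c(z))=\pi(c(z)\xi^\alpha)\pmod{\text{Im}\,\mathcal D}$. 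Summing over monomials yields $h\equiv\pi(h)\pmod{\text{Im}\,\mathcal D}$ for every $h\in\C[\xi,z]$, so $\ker\pi\subseteq\text{Im}\,\mathcal D$, and therefore $\text{Im}\,\mathcal D=\ker\pi$.

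Applying this to $f=(\xi_1^2+\cdots+\xi_n^2)P$ and any $g\in\C[z]$, the multinomial expansion $(\xi_1^2+\cdots+\xi_n^2)^m=\sum_{|\alpha|=m}\binom{m}{\alpha}\xi^{2\alpha}$ combined with the identity $\pi(\xi^\gamma R)=\partial_z^\gamma R$ for $R\in\C[z]$ (which follows inductively from $\pi(\xi_i h)=\partial_{z_i}\pi(h)$) yields
\[\pi(f^m g)\;=\;\Delta^m(P^m g),\qquad\Delta=\sum_{i=1}^n\partial_{z_i}^2.\]
Consequently $f^m\in\text{Im}\,\mathcal D$ is equivalent to $\Delta^m(P^m)=0$, and $f^mg\in\text{Im}\,\mathcal D$ is equivalent to $\Delta^m(P^mg)=0$. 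Statement (1) therefore translates verbatim into the following: for every homogeneous $P\in\C[z]$ of degree four, $\Delta^m(P^m)=0$ for all $m\ge 1$ forces $\Delta^m(P^m g)=0$ for every $g\in\C[z]$ and all $m\gg 0$.

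Finally, I would invoke the theorem of Zhao \cite{Z7}, asserting that this Laplacian Vanishing Conjecture in all dimensions $n\ge 1$ is equivalent to the Jacobian Conjecture in all dimensions $n\ge 1$ (via the reduction of Bass--Connell--Wright to homogeneous cubic $H$ and the passage to gradient systems $H=\nabla P$ with $P$ homogeneous of degree four). The main obstacle in an honest writeup is this last, cited, equivalence, which is the genuinely deep ingredient; by contrast, the image computation $\text{Im}\,\mathcal D=\ker\pi$ and the resulting clean formula $\pi(f^mg)=\Delta^m(P^mg)$ are elementary once the map $\pi$ has been isolated.
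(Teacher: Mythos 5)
The paper does not prove this theorem: it is cited verbatim from \cite{Z5}, Theorem~3.7, and the kernel description $\mathrm{Im}\,\mathcal D=\ker\mathcal E$ that you reconstruct as $\mathrm{Im}\,\mathcal D=\ker\pi$ is likewise lifted from \cite{Z5}, Theorem~3.1 (stated in the paper as Theorem~\ref{im=ker}). Your reconstruction of that kernel description is correct: the monomial identity
\[
c(z)\xi^\alpha=\mathcal D_i\bigl(c(z)\xi^{\alpha-e_i}\bigr)+(\partial_{z_i}c)\,\xi^{\alpha-e_i}
\]
does give $h\equiv\pi(h)\pmod{\mathrm{Im}\,\mathcal D}$ by descending induction on $\xi$-degree, and $\pi\circ\mathcal D_i=0$ follows from $\pi(\xi_i h)=\pi(\partial_{z_i}h)$ on monomials. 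The computation $\pi(f^m g)=\Delta^m(P^m g)$ for $f=(\xi_1^2+\cdots+\xi_n^2)P$ and $g\in\C[z]$ is also correct, since $\pi(\xi^\gamma R)=\partial_z^\gamma R$ and the $\partial_{z_i}$ commute.

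Where the argument is thin is the very last step. Your translation turns statement~(1) into the \emph{Mathieu-type} Vanishing Conjecture for $\Delta$: $\Delta^m(P^m)=0$ for all $m$ forces $\Delta^m(P^m g)=0$ for all $g\in\C[z]$ and $m\gg0$. What \cite{Z7} establishes is the equivalence of the Jacobian Conjecture with the \emph{weak} Vanishing Conjecture, in which $g$ is taken to be $P$ itself (i.e.\ $\Delta^m(P^{m+1})=0$ for $m\gg0$). The implication from statement~(1) to the weak form (take $g=P$) is immediate, so $(1)\Rightarrow(2)$ is fine via \cite{Z7}. But the converse $(2)\Rightarrow(1)$ requires promoting the weak conclusion ($g=P$) to the conclusion for arbitrary $g\in\C[z]$, and that promotion is precisely the content that \cite{Z5}, Theorem~3.7 adds over \cite{Z7}; it does not follow formally from \cite{Z7} alone. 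You flag the cited equivalence as the deep ingredient, which is fair, but the citation as written points to the wrong result: what you need is the $(2)\Rightarrow(1)$ direction of \cite{Z5}, Theorem~3.7 itself, not the $\Lambda$--VC/JC equivalence of \cite{Z7}. In short, your reduction is sound and matches the structure underlying the cited theorem, but the final appeal to the literature conflates two different theorems, and the genuinely nontrivial direction of the equivalence is left to a citation that does not cover it.
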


We now give a realization of the image of $\mathcal D$ that is established in \cite{Z5}.  Let $\mathcal E$ be the $\C$-linear map from $\C[\xi,z]$ to $\C[z]$ defined by sending a monomial $\xi_1^{\alpha_1}\cdots\xi_n^{\alpha_n}z_1^{\beta_1}\cdots z_n^{\beta_n}$ to $\partial_{z_1}^{\alpha_1}\cdots\partial_{z_n}^{\alpha_n}z_1^{\beta_1}\cdots z_n^{\beta_n}$.  Then:

\begin{theo}[\cite{Z5}, Theorem 3.1]\label{im=ker} $\mathrm{Im}\,\mathcal D=\mathrm{Ker}\,\mathcal E$.
\end{theo}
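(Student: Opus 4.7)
The plan is to prove the two inclusions separately. First I would verify $\mathrm{Im}\,\mathcal D \subseteq \mathrm{Ker}\,\mathcal E$ by a direct operator computation: writing an arbitrary $F \in \C[\xi,z]$ as $F = \sum_\alpha \xi^\alpha f_\alpha(z)$ with $f_\alpha \in \C[z]$, a one-line check gives
\[
\mathcal E(\xi_i F) = \sum_\alpha \partial_z^{\alpha+e_i} f_\alpha = \partial_{z_i}\mathcal E(F) = \sum_\alpha \partial_z^\alpha \partial_{z_i} f_\alpha = \mathcal E(\partial_{z_i} F),
\]
so $\mathcal E \circ \mathcal D_i = 0$ for each $i$, and therefore $\mathcal E$ vanishes on $\mathrm{Im}\,\mathcal D$.

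For the reverse inclusion $\mathrm{Ker}\,\mathcal E \subseteq \mathrm{Im}\,\mathcal D$, the key observation is that modulo $\mathrm{Im}\,\mathcal D$ the variable $\xi_i$ acts as $\partial_{z_i}$; indeed, for any $G \in \C[\xi,z]$,
\[
\xi_i G = \mathcal D_i(G) + \partial_{z_i}G \equiv \partial_{z_i}G \pmod{\mathrm{Im}\,\mathcal D}.
\]
Iterating this, I would prove by induction on $|\alpha|$ that $\xi^\alpha f(z) \equiv \partial_z^\alpha f(z) \pmod{\mathrm{Im}\,\mathcal D}$ for every multi-index $\alpha$ and every $f \in \C[z]$; the induction step chooses any $i$ with $\alpha_i \geq 1$ and applies the preceding congruence to $G = \xi^{\alpha-e_i}f(z)$, noting that $\partial_{z_i}$ commutes with $\xi^{\alpha-e_i}$. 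Summing over the bi-monomial decomposition of $F$ then yields $F \equiv \mathcal E(F) \pmod{\mathrm{Im}\,\mathcal D}$ for every $F \in \C[\xi,z]$. In particular, if $\mathcal E(F) = 0$, then $F \in \mathrm{Im}\,\mathcal D$, which completes the proof.

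I do not expect a genuine obstacle here: the theorem amounts to the structural statement that $\mathcal E$ is the unique $\C$-linear retraction of $\C[\xi,z]$ onto $\C[z]$ whose kernel is $\mathrm{Im}\,\mathcal D$, or equivalently that $F \mapsto \mathcal E(F)$ descends to an isomorphism $\C[\xi,z]/\mathrm{Im}\,\mathcal D \cong \C[z]$. The only organizational point to track is that each application of $\mathcal D_i$ in the induction strictly lowers $|\alpha|$, which is automatic from the form of the congruence.
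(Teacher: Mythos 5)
Your proof is correct. The paper itself does not reprove Theorem~\ref{im=ker}; it is quoted from \cite{Z5} without a proof, so there is no in-paper argument to compare against. Both halves of your argument go through cleanly: the identity $\mathcal E\circ\mathcal D_i=0$ follows from commuting partial derivatives exactly as you compute, and the reverse inclusion follows from the observation that $\xi_iG\equiv\partial_{z_i}G\pmod{\mathrm{Im}\,\mathcal D}$ together with your induction on $|\alpha|$ (where the induction step correctly applies the inductive hypothesis to $\partial_{z_i}f$ and uses that $\partial_{z_i}$ commutes with multiplication by $\xi^{\alpha-e_i}$). The closing structural remark is also accurate: since $\mathcal E$ restricts to the identity on $\C[z]$, it is a retraction, and $\mathrm{Ker}\,\mathcal E=\mathrm{Im}\,\mathcal D$ gives the splitting $\C[\xi,z]=\C[z]\oplus\mathrm{Im}\,\mathcal D$ and the induced isomorphism $\C[\xi,z]/\mathrm{Im}\,\mathcal D\cong\C[z]$.
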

\noindent This obviously makes it much easier to determine whether an element lies in $\text{Im}\,\mathcal D$, as $\mathcal E$ is easy to apply.

We now set $\mathcal M=\text{Im}\,\mathcal D\,(=\text{Ker}\,\mathcal E)$ and make a number of observations, letting $A=\C[\xi,z]$ as above, first noting that, by Theorem \ref{im=ker}, condition (M1) coincides with
\begin{equation}\text{$\mathcal E(f^m)=0$ for all $m\ge1$}\notag \end{equation}
in this context.

We define a multi-grading on the polynomial ring $\C[\xi,z]$ by setting the multi-degree of a monomial $\xi_1^{i_1}\cdots\xi_n^{i_n}z_1^{j_1}\cdots z_n^{j_n}$ to be $(j_1-i_1,\ldots,j_n-i_n)$.  We also have the ordinary grading on $\C[\xi,z]$ by which $\xi_1,\ldots,\xi_n$ each have degree $-1$ and $z_1,\ldots,z_n$ each have degree $1$.  The motivation for these choices is the map $\mathcal E$, which preserves $z_1,\ldots,z_n$ but converts $\xi_1,\ldots,\xi_n$ to operators which lower degree by one.  In the discussion below, ``multi-degree" refers to the former; ``degree" refers to the latter.  With $\C[z]$ viewed as a subring of $A=\C[\xi,z]$, these gradings restrict to give a multi-grading and a grading on $\C[z]$.  Note that the map $\mathcal E:A\to\C[z]$ preserves both the multi-degree and the degree of a monomial.

\begin{enumerate}

\item  Condition (M2) is satisfied if it holds whenever $g$ is a monomial in $A$.

\item We can write any $f\in A$ as a sum of terms of the form $z_1^{r_1}\cdots z_n^{r_n}Q$ where $Q$ has multi-degree $(0,\ldots,0)$, and $(r_1,\ldots,r_n)\in\mathbb Z^n$.  These terms are just the multi-homogeneous summands of $f$.  Any $Q(\xi,z)$ of multi-degree $(0,\ldots,0)$ can be written in the form $q(U_1,\ldots,U_n)$ where $U_i=\xi_iz_i$ for $i=1,\ldots,n$.  

\item If $f$ is multi-homogeneous of multi-degree $(r_1,\ldots,r_n)$, in other words if $f$ has the form $z_1^{r_1}\cdots z_n^{r_n}q(U_1,\ldots,U_n)$, then:

\begin{enumerate}
\item If $r_1,\ldots,r_n\ge0$ then $\mathcal E(f)=cz_1^{r_1}\cdots z_n^{r_n}$ for some $c\in\C$ (since $\mathcal E$ preserves multi-degree).
\item If $r_i<0$ for some $i$ then $\mathcal E(f)=0$.
\end{enumerate}

Note that if (b) holds for $f$ then it holds for $f^m$ for any $m\ge1$, hence (M1) holds for $f$.  Moreover it's easy to see that, for any $g\in A$, (b) holds for all multi-homogeneous terms of $f^mg$, for $m\gg0$, so (M2) holds for $f$ as well.

\item  For any $f\in A$, let $N_f$ be the convex polyhedron (Newton polyhedron) in $\mathbb R^n$ determined by the finite set of points $(r_1,\ldots,r_n)$ which are multi-degrees of the nonzero terms $z_1^{r_1}\cdots z_n^{r_n}q(U)$ (as above) appearing in $f$.  

\item  Note that if $f\in A$ is such that there exists $i$ such that the multi-degree of all multi-homogeneous summands of $f$ have negative $i$-coordinate, then again we have $\mathcal E(f^m)=0$ for all $m\ge1$ and $\mathcal E(f^mg)=0$ for all $g\in A, m\gg0$, hence $f$ satisfies (M1) and (M2).  This condition simply says that $N_f$ lies in the half space $\{(x_1,\ldots,x_n)\in\mathbb R^n\,|\,x_i<0\}$.

\item More generally, if there exists a hyperplane $H\subset\mathbb R^n$ through the origin such that the strictly positive $n$-tant $\{(x_1,\ldots,x_n)\in\mathbb R^n\,|\,x_1,\ldots,x_n>0\}$ and $N_f$ lie strictly on opposite sides of $H$, then $\mathcal E(f^m)=0$ for all $m\ge1$ and $\mathcal L(f^mg)=0$ for all $g\in A, m\gg0$, hence $f$ satisfies (M1) and (M2).  This can be seen as follows:  There is a nonzero vector $v=(v_1,\ldots,v_n)\in\mathbb R^n$ such that $v_1,\dots,v_n\ge0$ and such that $H=\{x\in\mathbb R^n\,|\,(x\cdot v)=0\}$ (usual inner product).  Then $(v\cdot r)<0$ for all $r\in N_f$.  It follows that for all terms $z_1^{s_1}\cdots z_n^{s_n}q(U_1,\ldots,U_n)$ of $f^m$, where $m\ge1$, we must have $(v\cdot s)<0$, where $s=(s_1,\ldots,s_n)$ (in other words all points on the Newton polyhedron of $f^m$ lies below $H$). Therefore we must have $s_i<0$ for some $i$, from which it follows that $\mathcal E(f^m)=0$.  Similarly, if $g\in A$ then for sufficiently large $m$, all points in the Newton polyhedron of $f^mg$ are below $H$, so that $\mathcal E(f^mg)=0$.

\item If $f\in A$ and $N_f$ has an extremal point $(r_1,\ldots,r_n)$ corresponding to the term $z^rq(U)=z_1^{r_1}\cdots z_n^{r_n}q(U_1,\ldots,U_n)$, then the point $(mr_1,\ldots,mr_n)$ lies on the Newton polyhedron of $f^m$ (from the term $z^{mr}q(U)^m=z_1^{mr_1}\cdots z_n^{mr_n}q(U_1,\ldots,U_n)^m$), and in fact is an extremal point.  Thus if $f$ satisfies (M1), so does the multi-homogeneous summand $z^rq(U)$.

\item\label{point1} We suspect that it cannot happen that a nonzero multi-homogeneous element $z^rq(U)$ with $r_1,\ldots,r_n\ge0$ satisfies (M1).  If this suspicion is true, then by the last item, the Newton polyhedron of an $f\in A$ satisfying (M1) cannot have an extremal point in the closed positive $n$-tant $\{(x_1,\ldots,x_n)\in\mathbb R^n\,|\,x_1,\ldots,x_n\ge0\}$.

\item\label{point2} To address the problem in the previous item, note that if a multi-homogeneous element $f=z_1^{r_1}\cdots z_n^{r_n}q(U_1,\ldots,U_n)$ satisfies (M1), i.e., $\mathcal E(f^m)=0$ for all $m\ge1$, then so does $\xi_1^{r_1}\cdots\xi_n^{r_n}f=U_1^{r_1}\cdots U_n^{r_n}q(U)$, which has multi-degree $(0,\ldots,0)$.  Thus we need to show that if $h\in \C[U_1,\ldots,U_n]$ and if $\mathcal E(h^m)=0$ for all $m\ge1$, then $h=0$.  This will be Conjecture \ref{numbers} below.

\end{enumerate}

Recall that $U_i=\xi_iz_i$.  One sees that for a monomial $U^\ell=U_1^{\ell_1}\cdots U_n^{\ell_n}$ we have $\mathcal E(U^\ell)=\ell!=\ell_1!\cdots \ell_n!$.   Thus the map $\mathcal E$ restricted to $\C[U_1,\ldots,U_n]$ is precisely the map $\cL$ of Definition \ref{Ldef}. In the conjectures below $U=(U_1,\ldots,U_n)$ can be taken to be any system of variables (forgetting $\xi$ and $z$ for the moment), and $\cL:\C[U_1,\ldots,U_n]\to\C$ the $\C$-linear map sending $U^\ell$ to $\ell!$.

\section{The Factorial Conjecture}\label{FactConj}

It follows from the discussion of the preceding section that the following assertion, which draws interest merely by virtue of its simplicity, is necessary for the Image Conjecture to hold.

\begin{conj}\label{weaknumbers} The kernel of $\cL:\C[U_1,\ldots,U_n]\to\C$ is a Mathieu subspace.
\end{conj}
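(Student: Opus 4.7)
The plan is to derive Conjecture \ref{weaknumbers} from the stronger assertion---to be promoted in the next section as the Factorial Conjecture---that $\cL(f^m)=0$ for all $m\ge 1$ forces $f=0$. Granted this, the Mathieu property on $\ker\cL$ is immediate: condition (M1) is exactly the hypothesis of the Factorial Conjecture, so it implies $f=0$, whence $f^m g=0\in\ker\cL$ for every $g\in\C[U_1,\ldots,U_n]$, giving (M2). The base case $n=1$ is Theorem \ref{n=1}, so what remains is to prove the Factorial Conjecture in general $n$.

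The natural analytic handle is the integral representation
\[
\cL(p)=\int_{[0,\infty)^n}p(u_1,\ldots,u_n)\,e^{-(u_1+\cdots+u_n)}\,du_1\cdots du_n,
\]
valid for every polynomial $p$ by Fubini from $\int_0^\infty u^k e^{-u}\,du=k!$. The hypothesis then asserts that all positive moments of $f$ against the probability measure $d\mu=e^{-(u_1+\cdots+u_n)}\,du$ on $[0,\infty)^n$ vanish. For $f$ with real coefficients one finishes at once: the integrand of $\cL(f^{2m})$ is non-negative, forcing $f$ to vanish on the orthant and hence as a polynomial. For complex $f$ I would induct on $n$ via the factorization $\cL=\cL_{n-1}\otimes\cL_1$: expanding $f=\sum_k g_k(U_1,\ldots,U_{n-1})\,U_n^k$ and applying Fubini, try to use the $n=1$ case inside the outer $\cL_{n-1}$, supplemented by differentiating the formal identity $\cL(e^{tf})=1$ in auxiliary parameters introduced into $f$ (e.g.\ replacing $f$ by $f+sg$) to extract additional vanishing relations.

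The main obstacle I foresee is a moment-problem gap in the complex case. Vanishing of all holomorphic moments $\int z^m\,d(f_*\mu)(z)$ of the complex pushforward $f_*\mu$ on $\C$ does not force $f_*\mu=0$; for that one would additionally need the mixed moments $\int z^a\bar z^b\,d(f_*\mu)$, about which the hypothesis is silent. One does get $\cL(\bar f^m)=\overline{\cL(f^m)}=0$ for free from the fact that $\cL$ is real on real monomials, but this gives no handle on $\cL(f^a\bar f^b)$ for $a,b\ge 1$. Bridging this gap---either by extracting the missing mixed information inductively, or by an algebraic rigidity argument precluding non-trivial polynomials whose holomorphic moments all vanish against $d\mu$---is where I expect the real work to lie.
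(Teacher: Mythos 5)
This statement is posed as a \emph{conjecture} in the paper; the paper offers no proof of it, and it remains open. Your opening reduction is correct and is exactly the paper's own observation: Conjecture \ref{weaknumbers} would follow from the stronger Factorial Conjecture \ref{numbers}, since condition (M1) for $\ker\cL$ is literally the hypothesis $\cL(f^m)=0$ for all $m\ge 1$, and once that forces $f=0$, condition (M2) is vacuous. So you have correctly identified the right stronger statement to aim at.

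But you then need to prove the Factorial Conjecture, and you do not. You give a complete argument only for real coefficients (which, as the paper's Remark \ref{form} observes, already follows from $\cL(f^2)=\langle f,f\rangle>0$ for $0\ne f\in\R[U]$), and you yourself flag that the complex case does not reduce to the real one: the hypothesis supplies only the ``holomorphic'' moments $\int f^m\,d\mu$ and not the mixed moments $\int f^a\bar f^b\,d\mu$, and vanishing of holomorphic moments alone does not kill the pushforward measure. That diagnosis is accurate and is a genuine gap; the Fubini/parameter-differentiation ideas you sketch do not, as written, manufacture the missing $\cL(f^a\bar f^b)$ information. The paper does not close this gap either. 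What it actually does is prove the Factorial Conjecture in special cases—$f=Mh$ with $M$ a monomial and $h$ having nonzero constant term (Proposition \ref{monomtimes}), hence all of $n=1$ (Theorem \ref{n=1}); powers of linear forms (Proposition \ref{formpower}, via symmetric functions and Theorem \ref{gap}); binary quadratic forms; and sums $\sum c_iU_i^d$ (Proposition \ref{powersum})—and the toolkit there is arithmetic rather than analytic: reduce coefficients to $\overline\Q$ by the Nullstellensatz (Remark \ref{algred}), pass to a Dedekind overring with enough valuations (Remark \ref{valuation}), and derive a contradiction by $p$-adic divisibility of the integers $\cL(f^m)$, choosing well-placed primes (via Wilson or Dirichlet) to isolate one dominant term. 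Your proposal therefore stands as a correct reduction together with an honest account of the obstruction, not as a proof; a complete argument would require resolving the complex moment-problem issue you identify, or else a new arithmetic idea beyond the special cases the paper handles.
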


As per items \ref{point1} and \ref{point2} above, we propose the stronger assertion, which we dub the Factorial Conjecture:

\begin{conj}[Factorial Conjecture]\label{numbers} Let $f\in\C[U_1,\ldots,U_n]$ be such that $\cL(f^m)=0$ for all $m\ge1$.  Then $f=0$.
\end{conj}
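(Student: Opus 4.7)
The plan is to recast the hypothesis as a moment problem via the integral representation coming from $\ell! = \int_0^\infty t^\ell e^{-t}\,dt$. For $f \in \C[U_1,\ldots,U_n]$ one has
\begin{equation*}
\mathcal L(f) \;=\; \int_{[0,\infty)^n} f(t_1,\ldots,t_n)\,e^{-(t_1+\cdots+t_n)}\,dt_1\cdots dt_n.
\end{equation*}
Write $\mu$ for the finite positive measure $e^{-(t_1+\cdots+t_n)}\,dt$ on $[0,\infty)^n$ and $\nu := f_*\mu$ for its push-forward to $\C$ under $f$. Then $\nu$ is a positive Borel measure on $\C$ with total mass $\mathcal L(1)=1$, and the hypothesis $\mathcal L(f^m)=0$ for every $m\ge 1$ is exactly the statement that the holomorphic moments $\int_\C z^m\,d\nu(z)$ vanish for every $m\ge 1$. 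The goal is to conclude $\nu = \delta_0$, which forces $f$ to vanish on $[0,\infty)^n$ and hence $f=0$ as a polynomial.

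As a warm-up the sub-case $f\in\R[U_1,\ldots,U_n]$ is immediate: $\nu$ is then supported on $\R$, and $\int_\R x^2\,d\nu = \mathcal L(f^2) = 0$ together with positivity forces $\nu=\delta_0$. For complex-coefficient $f$, however, $\nu$ sits on $\C$, and the vanishing of \emph{holomorphic} moments alone does not imply $\nu=\delta_0$ at the level of abstract positive measures (normalized arclength on any circle centered at the origin annihilates every $z^m$ with $m\ge 1$), so the specific structure of the push-forward must be exploited. Two natural extra ingredients are the conjugate identities $\mathcal L(\bar f^{\,m}) = \overline{\mathcal L(f^m)} = 0$, where $\bar f$ denotes $f$ with conjugated coefficients, and the fact that $\nu$ is supported on the semi-algebraic set $f([0,\infty)^n)\subset\C$ of real dimension at most $n$, with polynomial-growth moments $\int|z|^{2k}\,d\nu$ controlled by factorials of $\deg f$.

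The main obstacle, and the one I expect to carry the real weight of the conjecture, is bridging from this holomorphic-moment data to the full planar moment problem for $\nu$. My preferred route is induction on $n$ with base case Theorem~\ref{n=1} together with its characteristic-zero extension noted in Remark~\ref{prerem}. Viewing $f\in\C[U_1,\ldots,U_{n-1}][U_n]$ and applying Fubini,
\begin{equation*}
\mathcal L(f^m)\;=\;\int_0^\infty p_m(t)\,e^{-t}\,dt,\qquad p_m(t)\;:=\;\mathcal L_{n-1}\!\bigl(f(U_1,\ldots,U_{n-1},t)^m\bigr)\in\C[t],
\end{equation*}
so $\mathcal L_1(p_m) = 0$ for every $m\ge 1$. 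The inductive step then reduces to upgrading this to the pointwise vanishing $p_m(t)\equiv 0$ for every $t\ge 0$ and every $m\ge 1$; once that is in hand, applying the inductive hypothesis to $f(U_1,\ldots,U_{n-1},t) \in \C[U_1,\ldots,U_{n-1}]$ at each fixed $t$ closes the induction. The upgrade is \emph{not} a consequence of Theorem~\ref{n=1} applied to the individual $p_m$, because the $p_m$'s are not themselves powers of a single polynomial in $t$. I would try to supply it by combining the formal generating identity $\int (1-sf)^{-1}\,d\mu \equiv 1$, interpreted as a holomorphic function of $s$ on a neighborhood of the origin in $\C$ where $1-sf$ stays bounded away from zero on $[0,\infty)^n$, with a joint-analyticity argument in the parameters $s$ and $t$. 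Producing a workable mechanism for this pointwise upgrade is, to my mind, exactly where the depth of the Factorial Conjecture lies.
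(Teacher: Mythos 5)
You have not proved Conjecture \ref{numbers}, and you say so yourself in your closing sentence. The paper does not prove it either---it is stated as an open conjecture, and everything following it (Propositions \ref{twomonom}, \ref{monomtimes}, \ref{smallest}, \ref{formpower}, \ref{powersum}, Theorem \ref{n=1}, plus the two-variable quadratic case) is a partial result for restricted families of $f$. What you \emph{do} supply is correct and overlaps with the paper's Remark \ref{form}: the integral realization of $\cL$ and the consequence that $\cL(f^2)=0$ forces $f=0$ when $f\in\R[U]$ are both there (the latter via positive-definiteness of $\langle\,,\,\rangle$ on $\R[U]$). Your circle example correctly shows that vanishing of the holomorphic moments $\int_\C z^m\,d\nu$ of the pushforward $\nu=f_*\mu$ does not pin down $\nu$; and the conjugate trick only adds $\int_\C\bar z^m\,d\nu=0$, which normalized arclength on a circle about the origin also satisfies, so as abstract moment data what you have assembled is genuinely underdetermined.

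The genuine gap is the pointwise upgrade from $\int_0^\infty p_m(t)e^{-t}\,dt=0$ (where $p_m(t)=\cL_{n-1}\bigl(f(\,\cdot\,,t)^m\bigr)$) to $p_m(t)\equiv0$, and the generating-function device you float does not deliver it. Setting $G(s,t)=\sum_{m\ge0}s^m p_m(t)$, the hypothesis says $\int_0^\infty G(s,t)e^{-t}\,dt\equiv1$ for small $s$; but joint analyticity of $G$ together with the vanishing of a single weighted $t$-integral for each $s$ does not force $G(s,t)\equiv1$. For each fixed $s$ the function $t\mapsto G(s,t)-1$ merely lies in the enormous kernel of $h\mapsto\int_0^\infty h(t)e^{-t}\,dt$, and nothing in your setup singles out the zero element of that kernel. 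This is not a patchable detail: it \emph{is} the induction step, hence the whole conjecture, and you are right that it is where the depth lies. For contrast, the partial results the paper does obtain use a completely different, arithmetic toolkit: reduce the coefficients to $\overline{\Q}$ (Remark \ref{algred}), lift a carefully chosen prime $p$ to a valuation of a Dedekind overring (Remark \ref{valuation}), and extract a divisibility contradiction from explicit factorial congruences (Wilson's theorem, Dirichlet primes in progression). None of that machinery appears in, or is obviously compatible with, your measure-theoretic plan.
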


As seen above, this conjecture would imply that the Newton polyhedron of any $f\in A=\C[\xi,z]$ satisfying (M1) has no extremal points in the closed positive $n$-tant.  

The Factorial Conjecture looks innocent on first glance; one would think it is either easy to prove or else a counterexample should be findable.  However no proof or counterexample has yet been given.  The authors believe it to be true and will devote quite a bit of effort below in showing that the condition $\cL(f^m)=0$ for all $m\ge1$ implies $f=0$ in various situations.  In this case we say ``the Factorial Conjecture holds for $f$".

As a first observation, let us note that the Factorial Conjecture holds for $f=cM$ where $c\in\C$ and $M$ is a monomial in $\C[U]$, since the condition $\cL(f)=0$ obviously implies $c=0$.  More strongly we have:

\begin{propo}\label{twomonom}  The Factorial Conjecture holds for $f\in\C[U_1,\ldots,U_n]$ of the form $c_1M_1+c_2M_2$, where $M_1,M_2$ are monomials and $c_1,c_2\in\C$.  More strongly, $\cL(f)=\cL(f^2)=0$ implies $f=0$ in this case.
\end{propo}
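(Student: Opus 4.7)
The plan is to handle the trivial case $M_1=M_2$ first (where $f=(c_1+c_2)M_1$ and $\cL(f)=0$ forces $c_1+c_2=0$, hence $f=0$), and then to assume $M_1\ne M_2$. Write $M_i=U^{\alpha_i}$ with $\alpha_1\ne\alpha_2\in\mathbb N^n$, using multi-index notation $\alpha!=\prod_j\alpha_j!$, so that $\cL(U^\alpha)=\alpha!$. It suffices to assume both $c_1,c_2\ne 0$ and derive a contradiction, since if either is zero the relation $\cL(f)=c_1\alpha_1!+c_2\alpha_2!=0$ forces the other to vanish as well (because $\alpha!\ge 1$).

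Assuming $c_1,c_2\ne 0$, I would use $\cL(f)=0$ to solve $c_2=-c_1\alpha_1!/\alpha_2!$ and substitute into
\[\cL(f^2)=c_1^2(2\alpha_1)!+2c_1c_2(\alpha_1+\alpha_2)!+c_2^2(2\alpha_2)!=0.\]
After clearing denominators this reduces to the purely combinatorial identity
\[(\alpha_2!)^2(2\alpha_1)!+(\alpha_1!)^2(2\alpha_2)!=2\alpha_1!\alpha_2!(\alpha_1+\alpha_2)!,\]
or, dividing by $(\alpha_1!)^2(\alpha_2!)^2$,
\[\prod_j\binom{2\alpha_{1,j}}{\alpha_{1,j}}+\prod_j\binom{2\alpha_{2,j}}{\alpha_{2,j}}=2\prod_j\binom{\alpha_{1,j}+\alpha_{2,j}}{\alpha_{1,j}}.\]

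The second and main step is to show that this identity forces $\alpha_1=\alpha_2$. By AM-GM the left-hand side is at least $2\sqrt{\prod_j\binom{2\alpha_{1,j}}{\alpha_{1,j}}\binom{2\alpha_{2,j}}{\alpha_{2,j}}}$. Pointwise, I would use the inequality $\binom{2a}{a}\binom{2b}{b}\ge\binom{a+b}{a}^2$ (with equality iff $a=b$), which follows from
\[\frac{\binom{2a}{a}\binom{2b}{b}}{\binom{a+b}{a}^2}=\frac{(2a)!(2b)!}{((a+b)!)^2}=\frac{\binom{2a+2b}{a+b}}{\binom{2a+2b}{2a}}\ge 1,\]
since $\binom{2(a+b)}{a+b}$ is the central (hence maximal) binomial coefficient in its row, with equality precisely when $2a=a+b$. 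Chaining the two bounds gives $\text{LHS}\ge\text{RHS}$ in the identity above, and equality forces $\alpha_{1,j}=\alpha_{2,j}$ for every $j$, contradicting $\alpha_1\ne\alpha_2$.

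The main obstacle is recognizing that the numerical identity produced by the two vanishing conditions $\cL(f)=\cL(f^2)=0$ is exactly the equality case of a natural chain of multi-index binomial inequalities (AM-GM followed by a central-binomial bound); once this is spotted, each link in the chain is standard. Without this reformulation the identity $(\alpha_2!)^2(2\alpha_1)!+(\alpha_1!)^2(2\alpha_2)!=2\alpha_1!\alpha_2!(\alpha_1+\alpha_2)!$ looks difficult to analyze directly for general multi-indices.
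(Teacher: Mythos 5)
Your proof is correct, but it takes a genuinely different route from the paper. The paper (Remark \ref{form} and Proposition \ref{twomonom}) introduces the integral realization $\cL(g)=\int_{D_n}g(U)e^{-U}\,dU$ and observes that $\cL(h^2)=\langle h,h\rangle>0$ for nonzero $h\in\R[U]$; from $\cL(f)=0$ one writes $f=c_1h$ with $h=M_1-(L_1/L_2)M_2\in\Q[U]\setminus\{0\}$ and then $\cL(f^2)=c_1^2\langle h,h\rangle=0$ forces $c_1=0$ (hence $c_2=0$). Your argument instead eliminates $c_2$ to obtain the purely arithmetic identity
\[
\prod_j\binom{2\alpha_{1,j}}{\alpha_{1,j}}+\prod_j\binom{2\alpha_{2,j}}{\alpha_{2,j}}=2\prod_j\binom{\alpha_{1,j}+\alpha_{2,j}}{\alpha_{1,j}},
\]
and shows it is the equality case of a chain: AM--GM, then coordinatewise $\binom{2a}{a}\binom{2b}{b}\ge\binom{a+b}{a}^2$ (reduced to the maximality of the central binomial coefficient $\binom{2a+2b}{a+b}$, equality iff $a=b$). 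Since equality in the second chain already forces $\alpha_{1,j}=\alpha_{2,j}$ for all $j$, you reach the contradiction $M_1=M_2$. The paper's proof is shorter and immediately scales to any number of terms once positive-definiteness is granted; your proof is more elementary and self-contained, avoiding the Gamma-integral entirely and making the equality case completely explicit. Both are valid.

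One small stylistic point: after establishing $\mathrm{LHS}\ge 2\sqrt{AB}\ge\mathrm{RHS}$, it is worth saying explicitly that the hypothesis gives $\mathrm{LHS}=\mathrm{RHS}$, so both inequalities are equalities; the second equality, being a product of pointwise inequalities with the two sides equal, forces termwise equality and hence $\alpha_{1,j}=\alpha_{2,j}$ for every $j$.
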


The proof will involve the following observation.

\begin{rema}\label{form}  The one-variable formula $\int_0^\infty U^k e^{-U}dU=k!$ (easily proved inductively using integration by parts) leads to the multi-variable formula
$$\int_{D_n}U^ke^{-U}dU=k!$$
where $U^k=U_1^{k_1}\cdots U_n^{k_n}$ and $k!=k_1!\cdots k_n!$, $dU=dU_1\cdots dU_n$, and $D_n$ is the non-negative $n$-tant $U_1\ge0,\ldots,U_n\ge0$ in $\R^n$.  It follows that for $f\in\C[U_1,\ldots,U_n]$, $\cL(f)$ can be realized as
\begin{equation}
\cL(f)=\int_{D_n}f(U)e^{-U}dU
\end{equation}
(which, incidentally, gives a way to calculate $\cL(f)$ using a symbolic algebra program such as Maple).  Letting $\langle\,\,,\,\,\rangle$ be the Hermitian inner product defined on $\C[U]$ by 
\begin{equation}
\langle f,g\rangle=\int_{D_n}f(U)\overline{g(U)}e^{-U}dU
\end{equation} 
we note that this restricts to a positive definite form on $\R[U]$, and that $\cL(f^2)=\langle f,f\rangle$, which must be strictly positive if $f\in\R[U]$ and $f\ne0$.
\end{rema}

\begin{proof}[Proof of Proposition \ref{twomonom}]  We have $\cL(f)=c_1L_1+c_2L_2=0$ with $L_1,L_2\in\Z-\{0\}$, so $c_2=-c_1L_1/L_2$ and $f=c_1h$ where $h=M_1-(L_1/L_2)M_2\in\Q[U]-\{0\}$.  From Remark \ref{form} we have $0=\cL(f^2)=\langle f,f\rangle=c_1\bar{c_1}\langle h,h\rangle$, which shows $c_1=0$, since $\langle h,h\rangle>0$.  By symmetry we have $c_2=0$, so $f=0$.
\end{proof}

Now we make two remarks that will be important in several of the proofs that follow.\footnote{It should be acknowledged that the technique of making reductions using these ideas is due to Mitya Boyarchenko.}  The first remark shows that to prove the Factorial Conjecture we may assume $f$ has coefficients which are algebraic numbers.  

\begin{rema}[Algebraic reduction]\label{algred} Given a collection of monomials $M_1,\ldots,M_d\in\C[U]$ (where $U$ represents $U_1,\ldots,U_n$), we consider whether there exists $f\ne0$ of the form $\sum_{i=1}^d c_iM_i$ which satisfy $\cL(f^m)=0$ for all $m\ge1$.  Thinking of of $c_1,\ldots,c_d$ as indeterminates, we note that $\cL(f^m)$ is a homogeneous polynomial of degree $m$ in $\Z[c_1,\ldots,c_d]$.  By the Nullstellensatz, the existence of a nonzero solution is equivalent to saying the polynomials $\cL(f^m)$ generate a homogeneous ideal in $\Q[c_1,\ldots,c_d]$ whose radical is strictly contained in the ideal generated by the indeterminates $c_1,\ldots,c_d$, which, in turn, is equivalent to the existence of a nonzero solution over $\overline\Q$, the algebraic closure of $\Q$. Similarly, if $f$ has the form $h+\sum_{i=1}^d c_iM_i$ where $h$ is a nonzero polynomial in $\Q[U]$ not involving the monomials $M_1,\ldots,M_d$, then consider the ideal generated by the (non-homogeneous) polynomials $\cL(f^m)$ in $\Q[c_1,\ldots,c_d]$.  The existence of a solution over $\C$ is equivalent to saying this ideal is not all of $\Q[c_1,\ldots,c_d]$, which is equivalent to the existence of a solution over $\overline\Q$.\end{rema}

\begin{rema}[Extension of primes]\label{valuation}Given any $c_1,\ldots,c_d\in\overline\Q$, the ring $\Q[c_1,\ldots,c_d]$ has a ring extension $\mathcal O$ in $\overline\Q$ which is integral over $\Z[1/\ell]$, for some $\ell\in\Z$, and we can take $\mathcal O$ to be a Dedekind ring (replacing $\mathcal O$ by its integral closure).  Hence for all but finitely many primes $p\in\Z$ (specifically, those primes not dividing $\ell$), $p\Z$ extends to a prime ideal of $\mathcal O$, or, equivalently, $\mathcal O$ has a (not necessarily unique) valuation $v_p$ which has positive value at $p$.  We will say ``$v_p$ is a valuation lying over $p$".  For $k\in\Z$ it will then be the case that $v_p(k)>0$ if and only if $p$ divides $k$ in $\Z$.
\end{rema}

\begin{propo}\label{monomtimes} The Factorial Conjecture holds for $f\in\C[U_1,\ldots,U_n]$ having the form $f=Mh$ where $M$ is a monomial and $h$ has nonzero constant term.
\end{propo}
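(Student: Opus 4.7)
The plan is to derive a contradiction by showing $\cL(f^p)\ne 0$ for an appropriately chosen prime $p$, using the algebraic-reduction and prime-extension techniques of Remarks \ref{algred} and \ref{valuation} together with a Frobenius-style computation modulo $p$. Write $M=U^\alpha$ and $h=b_0+\sum_{\gamma\ne 0}b_\gamma U^\gamma$ with $b_0\ne 0$. By Remark \ref{algred} I reduce to the case when all $b_\gamma$ lie in $\overline\Q$, and then invoke Remark \ref{valuation} to fix a Dedekind ring $\mathcal O\subset\overline\Q$ containing the $b_\gamma$ together with a valuation $v_p$ lying over a rational prime $p$. Since only finitely many primes of $\mathcal O$ contain $b_0$, I can choose $p$ so that $v_p(b_0)=0$; let $\mathfrak p$ be the corresponding prime of $\mathcal O$ and $\bar{\mathcal O}=\mathcal O/\mathfrak p$ its residue field, of characteristic $p$.

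Next I expand
\[
\cL(f^p)=\sum_\delta (h^p)_\delta\,(p\alpha+\delta)!=(p\alpha)!\cdot T,\qquad T:=\sum_\delta (h^p)_\delta\,P_\delta,
\]
where $P_\delta=\prod_i (p\alpha_i+1)(p\alpha_i+2)\cdots (p\alpha_i+\delta_i)\in\Z$. Since $(p\alpha)!$ is a positive integer, it suffices to prove $T\ne 0$ in $\mathcal O$, and for this I will show that the image $\bar T$ in $\bar{\mathcal O}$ is nonzero.

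The key computation is the Frobenius identity in $\bar{\mathcal O}[U]$: $\bar h^{\,p}=\sum_\gamma \bar b_\gamma^{\,p}U^{p\gamma}$, which forces $(h^p)_\delta\in\mathfrak p$ unless $\delta=p\gamma$ for some $\gamma\in\supp(h)$, in which case $(h^p)_\delta\equiv \bar b_\gamma^{\,p}\pmod{\mathfrak p}$. Moreover, for any $\gamma\ne 0$ the integer $P_{p\gamma}$ is divisible by $p$: choosing $i_0$ with $\gamma_{i_0}\ge 1$, the index $j$ in the inner product defining $P_{p\gamma}$ ranges up to $p\gamma_{i_0}\ge p$, so the factor $p\alpha_{i_0}+p=p(\alpha_{i_0}+1)$ appears. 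Hence every summand of $\bar T$ with $\gamma\ne 0$ vanishes, leaving $\bar T=\bar b_0^{\,p}$, which is nonzero because $v_p(b_0)=0$. This yields $T\ne 0$ and therefore $\cL(f^p)\ne 0$, contradicting the hypothesis that $\cL(f^m)=0$ for all $m\ge 1$.

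The main obstacle is the extra monomial factor $U^{p\alpha}$ in $f^p=U^{p\alpha}h^p$, which inflates the factorials $(p\alpha+\delta)!$ relative to what one would encounter for the Factorial Conjecture on $h$ alone. Factoring out $(p\alpha)!$ and then observing that the Pochhammer-type correction $P_{p\gamma}$ automatically picks up a factor of $p$ whenever $\gamma\ne 0$ is what neutralizes this inflation and allows the Frobenius reduction to isolate the desired nonvanishing $\gamma=0$ contribution $b_0^p$.
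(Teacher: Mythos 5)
Your proof is correct, and its core idea — compute $\cL(f^p)$ for a suitable prime $p$, invoke the Frobenius congruence $h^p \equiv \sum_\gamma b_\gamma^p U^{p\gamma} \pmod p$, and observe that the Pochhammer factor $P_{p\gamma}=(p\alpha+p\gamma)!/(p\alpha)!$ acquires a factor of $p$ whenever $\gamma\ne 0$ so that only the $\gamma=0$ term survives — is exactly the mechanism of the paper's proof. The only difference is bookkeeping: the paper stays in $\Z[c_1,\ldots,c_d]$ (with $c_i\in\C$), divides the relation $\cL(f^p)=0$ by $(p\alpha)!$ to get $p\mid 1$, and then invokes the elementary fact that only finitely many rational primes can be units in a finitely generated $\Z$-algebra. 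You instead route through Remarks \ref{algred} and \ref{valuation}, reducing to $\overline{\Q}$-coefficients and picking a Dedekind overring $\mathcal O$ with a valuation $v_p$ at which $b_0$ is a unit. Both are sound; the paper's version is a touch more elementary because it avoids the algebraic reduction and valuation-theoretic setup (which the paper reserves for later propositions, such as \ref{smallest} and \ref{powersum}, where the argument genuinely requires tracking a single valuation rather than all primes). Your normalization is also slightly different — you keep $b_0$ arbitrary and choose $p$ with $v_p(b_0)=0$, while the paper first scales $h$ so its constant term is $1$ — but this is immaterial.
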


\begin{proof} Suppose such an $f$ has the property $\cL(f^m)=0$ for $m\ge1$.  We can assume the constant term of $h$ is $1$, and that $h\ne1$  Then $f=M+c_1M_1+\cdots +c_dM_d$ where $M_1,\ldots,M_d$ are monomials properly divisible by $M$.  For any prime $p\in\Z$ we have
\begin{equation}\label{eq1}
f^p=M^p+\sum_{i=1}^d c_i^pM_i^p+p\sum_jg_j(c_1,\ldots,c_d)N_j
\end{equation}
where, for each $j$, $g_j(c_1,\ldots,c_d)\in\Z[c_1,\ldots,c_d]$ and $N_j$ is a monomial divisible by $M^p$.
Write $M=U^\alpha,M_1=U^{\alpha_1},\ldots,M_d=U^{\alpha_d}$, and $N_j=U^{\beta_j}$.  Applying $\cL$ to (\ref{eq1}) yields
\begin{equation}\label{eq2}
\cL(f^p)=(p\alpha)!+\sum_{i=1}^d c_i^p(p\alpha_i)! +p\sum_jg_j(c_1,\ldots,c_d)\beta_j!=0\,.
\end{equation}
We make two observations: Since $M$ properly divides $M_i$, we have $\alpha<\alpha_i$, so $(p\alpha)!$ divides $(p\alpha_i)!$ in $\Z$ and moreover, $p$ divides $(p\alpha_i)!/(p\alpha)!$ in $\Z$. Secondly, since $M^p$ divides $N_j$, $(p\alpha)!$ divides $(p\beta_j)!$ in $\Z$.  Dividing (\ref{eq2}) by $(p\alpha)!$, we get
\begin{equation*}
1+\sum_{i=1}^d c_i^p\frac{(p\alpha_i)!}{(p\alpha)!} +p\sum_jg_j(c_1,\ldots,c_d)\frac{\beta_j!}{(p\alpha)!}=0\,,
\end{equation*}
which shows that $p$ divides $1$ in $\Z[c_1,\ldots,c_d]$.  However, only finitely many primes can be units in $\Z[c_1,\ldots,c_d]$, so choosing $p$ to avoid this finite set brings us to a contradiction.
\end{proof}

Proposition \ref{monomtimes} has these two immediate consequences:

\begin{propo}\label{const} The Factorial Conjecture holds for $f\in\C[U_1,\ldots,U_n]$ having nonzero constant term.
\end{propo}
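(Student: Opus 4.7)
The plan is to obtain Proposition \ref{const} as a direct specialization of Proposition \ref{monomtimes}, taking the monomial $M$ to be the trivial monomial $1 = U^0$ and $h$ to be $f$ itself. Under this choice, the hypothesis of Proposition \ref{monomtimes}, namely that $f = Mh$ with $M$ a monomial and $h$ having nonzero constant term, reduces precisely to the hypothesis of Proposition \ref{const}: that $f$ has nonzero constant term. The conclusion, that $\cL(f^m) = 0$ for all $m \ge 1$ implies $f = 0$, then transfers verbatim. There are no intermediate steps to carry out and no estimates to check; the substantive work has already been done in the proof of Proposition \ref{monomtimes}.

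The only point worth verifying is that the constant monomial is admissible in the statement of Proposition \ref{monomtimes}, which it clearly is. As a sanity check on the mechanics of the previous proof, with $M = U^{\alpha}$ for $\alpha = 0$ we have $(p\alpha)! = 1$, so the divisibility claims used there, that $(p\alpha)!$ divides $(p\alpha_i)!$ and $(p\beta_j)!$, and that $p$ divides $(p\alpha_i)!/(p\alpha)! = (p\alpha_i)!$ whenever $M$ properly divides $M_i$, become automatic: each $\alpha_i$ is then a nonzero tuple, so $(p\alpha_i)!$ is a factorial of a positive multiple of $p$ and hence is divisible by $p$. The final contradiction, that $p$ would divide $1$ in $\Z[c_1,\ldots,c_d]$ for infinitely many primes, is obtained in exactly the same way. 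The main obstacle, therefore, is not in this proposition at all; it lies in Proposition \ref{monomtimes}, and is already resolved.
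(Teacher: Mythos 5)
Your proof is correct and matches the paper's own argument exactly: the paper's proof of Proposition \ref{const} is simply ``Apply Proposition \ref{monomtimes} with $M=1$.'' Your additional sanity check on the divisibility mechanics when $\alpha = 0$ is accurate but unnecessary, as the specialization is entirely formal.
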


\begin{proof}
Apply Proposition \ref{monomtimes} with $M=1$.
\end{proof}

\begin{theo}\label{n=1} The Factorial Conjecture holds for $n=1$.
\end{theo}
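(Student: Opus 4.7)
The plan is to observe that Theorem \ref{n=1} is an immediate consequence of Proposition \ref{monomtimes}. The key point is that in the one-variable case, the hypothesis of Proposition \ref{monomtimes} is automatically satisfied by every nonzero polynomial.

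First, I would note that any nonzero $f \in \C[U]$ admits a unique factorization $f = U^k h(U)$, where $k \ge 0$ is the order of vanishing of $f$ at $U = 0$ and $h \in \C[U]$ satisfies $h(0) \neq 0$. This is just the statement that we can pull out the largest power of $U$ that divides $f$, leaving a polynomial with nonzero constant term.

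With this decomposition in hand, I would take $M = U^k$ (a monomial) and apply Proposition \ref{monomtimes} directly: it asserts that if $\cL(f^m) = 0$ for all $m \ge 1$ with $f = Mh$ of this shape, then $f = 0$. Thus, assuming for contradiction that $f \neq 0$ and $\cL(f^m) = 0$ for all $m \ge 1$, Proposition \ref{monomtimes} produces the contradiction, completing the proof.

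The main ``obstacle'' is purely notational: nothing new needs to be done beyond recognizing that the one-variable case is the degenerate instance of the monomial-times-polynomial-with-nonzero-constant-term setting. All the number-theoretic work (reducing to algebraic coefficients via Remark \ref{algred}, choosing a valuation $v_p$ lying over a suitable prime as in Remark \ref{valuation}, and exploiting divisibilities of factorials modulo $p$) has already been carried out in the proof of Proposition \ref{monomtimes}, so no further argument is required here.
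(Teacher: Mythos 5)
Your proof is correct and follows exactly the same route as the paper: factor a nonzero one-variable polynomial as $f = U^k h$ with $h(0)\neq 0$ and invoke Proposition \ref{monomtimes}. The paper's proof is a one-liner stating precisely this reduction.
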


\begin{proof}
Any nonzero polynomial in one variable has the form $f=Mh$ of Proposition \ref{monomtimes}.
\end{proof}

The following says something a little different from Proposition \ref{monomtimes}.

\begin{propo}\label{smallest} The Factorial Conjecture holds for $f\in\C[U_1,\ldots,U_n]$ of the form $cM_0+\sum_{i=1}^d c_iM_i$ where $M_0=U_1^{k_1}\cdots U_n^{k_n}$ with $k_1\ge1$ and $k_1\ge k_i$ for $i=2,\ldots,n$, $c,c_1,\ldots,c_d\in\C$ with $c\ne0$, and $M_1,\ldots,M_d$ are monomials each divisible by $U_1^{k_1+1}$.
\end{propo}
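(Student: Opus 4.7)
The plan is to adapt the Boyarchenko-style valuation argument of Proposition~\ref{monomtimes}. Assume for contradiction that $c\ne 0$ and $\mathcal L(f^m)=0$ for all $m\ge 1$. By Remark~\ref{algred} reduce to the case $c,c_1,\ldots,c_d\in\overline\Q$, and by Remark~\ref{valuation} fix a Dedekind ring $\mathcal O\subset\overline\Q$ containing the coefficients, equipped with valuations $v_\mathfrak p$ lying over a cofinite set of rational primes. Pick a large prime $p$ (exceeding all exponents appearing in $M_0,M_1,\ldots,M_d$) for which $v_\mathfrak p(c)=0$ and $v_\mathfrak p(c_i)\ge 0$ for every $i$.

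Expanding via the Frobenius-style identity
\[
f^p=c^pM_0^p+\sum_{i=1}^d c_i^p M_i^p+p\sum_j g_j(c_1,\ldots,c_d)\,N_j,
\]
with $g_j\in\mathcal O$ arising from multinomial coefficients divided by $p$ and each mixed monomial $N_j$ (a product of $p$ factors that includes at least one $M_i$, $i\ge 1$) having $U_1$-exponent at least $pk_1+1$, I will apply $\mathcal L$ and divide the resulting scalar identity by $(pk_1)!$. Writing $M_i=U_1^{\alpha_{i,1}}\cdots U_n^{\alpha_{i,n}}$, the crucial divisibilities are $(p\alpha_{i,1})!/(pk_1)!\in p\Z$ (from $\alpha_{i,1}\ge k_1+1$) and $\mathcal L(N_j)/(pk_1)!\in\Z$ (from the $U_1$-bound on $N_j$), producing the identity
\[
c^p\prod_{\ell\ge 2}(pk_\ell)!
+\sum_{i=1}^d c_i^p\,\frac{(p\alpha_{i,1})!}{(pk_1)!}\prod_{\ell\ge 2}(p\alpha_{i,\ell})!
+p\sum_j g_j\,\frac{\mathcal L(N_j)}{(pk_1)!}=0
\]
in $\mathcal O$. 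Using the classical congruence $(pm)!\equiv(-p)^m m!\pmod{p^{m+1}}$ for $p>m$, factor $\prod_{\ell\ge 2}(pk_\ell)!=p^s u$ where $s=\sum_{\ell\ge 2}k_\ell$ and $u\in\mathcal O$ is a $\mathfrak p$-unit. If the second and third sums can be shown to have $v_\mathfrak p$-valuation at least $s+1$, then the identity forces $v_\mathfrak p(c^p u)\ge 1$, hence $c\in\mathfrak p$; varying $p$ over the cofinite set of admissible valuations then gives $c=0$, contradicting the hypothesis.

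The hard part is precisely this valuation estimate. The pure $M_i^p$-contribution has $v_\mathfrak p$-valuation $\deg M_i-k_1$, which reaches the required threshold $s+1=\deg M_0-k_1+1$ only when $\deg M_i\ge\deg M_0+1$---a condition that fails, for instance, when $M_i=U_1^{k_1+1}$ while $M_0$ carries nontrivial $U_\ell$-factors for some $\ell\ge 2$; analogously, mixed $N_j$-contributions formed from $p-1$ copies of $M_0$ paired with a single such low-degree $M_i$ sit at borderline valuation, and here the hypothesis $k_1\ge k_\ell$ is what constrains how much $U_\ell$-shortfall can accumulate against the $U_1$-excess. The intended remedy is a preliminary peeling-off induction: among the $M_i$'s of minimal total degree (possibly less than $\deg M_0$), apply the same prime argument to the dominant-valuation term of $\mathcal L(f^p)=0$ so that the corresponding $c_i$ is forced into $\mathfrak p$, using higher moments $\mathcal L(f^{p^k})$ or simultaneous analysis at several primes to resolve ties when multiple $M_i$'s attain the minimum degree. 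Removing those $c_iM_i$ from $f$ and iterating eventually leaves a residual polynomial with $\deg M_i>\deg M_0$ for every remaining $i$, whereupon the Boyarchenko argument outlined above closes the proof and yields $c=0$.
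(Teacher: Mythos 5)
Your proposal takes a genuinely different route — the Frobenius $p$-th power expansion borrowed from Proposition~\ref{monomtimes} — and you have correctly identified that this route hits a real obstruction: after dividing the identity $\cL(f^p)=0$ by $(pk_1)!$, the term $c^p\prod_{\ell\ge 2}(pk_\ell)!$ sits at $p$-adic valuation $s=\deg M_0-k_1$, while the pure contributions $c_i^p\cL(M_i^p)/(pk_1)!$ have valuation $\deg M_i-k_1$, which can equal or even undershoot $s$ whenever $\deg M_i\le\deg M_0$. Since the hypothesis only controls the $U_1$-exponent of $M_i$ (namely $\alpha_{i,1}\ge k_1+1$) and places no lower bound on $\deg M_i$, this is not a marginal case but the generic one. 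The proposed remedy — a ``peeling-off'' induction that sorts out the low-degree $M_i$'s first, aided by higher moments $\cL(f^{p^k})$ or multi-prime analysis — is not developed to the point where one can verify it closes the gap; in particular it is unclear what invariant decreases, how ties among minimal-degree $M_i$'s are actually broken, or why the residual coefficients land in a common prime.

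The paper's proof avoids this difficulty outright by not raising $f$ to a prime power at all. It instead expands $f^m$ by the full multinomial theorem,
\[
\cL(f^m)=\cL(M_0^m)+\sum_{i=1}^{m}\sum_{i_1+\cdots+i_d=i}\frac{m!}{(m-i)!\,i_1!\cdots i_d!}\,c_1^{i_1}\cdots c_d^{i_d}\,\cL\bigl(M_0^{m-i}M_1^{i_1}\cdots M_d^{i_d}\bigr),
\]
and chooses the prime relative to $m$, not equal to it: by Dirichlet one picks $m$ so that $p=mk_1+1$ is prime and admits a valuation $v_p$ on $\mathcal O$. Then $\cL(M_0^m)=\prod_{\ell}(mk_\ell)!$ has $v_p=0$ exactly, since every $mk_\ell\le mk_1<p$ (this is where $k_1\ge k_\ell$ enters), while every other summand has $U_1$-exponent at least $(m-i)k_1+i(k_1+1)=mk_1+i\ge p$ (this is where $U_1^{k_1+1}\mid M_i$ enters), so its $\cL$-value is divisible by $p$. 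No delicate valuation bookkeeping across degrees is needed, no peeling-off, and no congruence $(pm)!\equiv(-p)^m m!\pmod{p^{m+1}}$: the single term $\cL(M_0^m)$ is a $p$-adic unit and everything else is in $\mathfrak p$, which is an immediate contradiction. I would redirect your effort toward this choice of prime rather than trying to repair the $p$-th power expansion.
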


\begin{proof}  Assume such an $f$ has the property $\cL(f^m)=0$ for $m\ge1$.  We may assume $c=1$ and that $c_1,\ldots,c_d\in\overline\Q$, by Remark \ref{algred}.  Choose a Dedekind overring $\mathcal O$ of $\Z[c_1,\cdots,c_d]$ as in Remark \ref{valuation}.  Writing
\begin{equation*}
\begin{aligned}
f^m&=(M_0+\sum_{i=1}^d c_iM_i)^m\\
&=\sum_{i_0+i_1+\cdots+i_d=m}\binom{m}{i_0,i_1,\ldots,i_d}c_1^{i_1}\cdots c_d^{i_d}M_0^{i_0}M_1^{i_1}\cdots M_d^{i_d}\\
&=M_0^m+\sum_{i=1}^m\sum_{i_1+\cdots+i_d=i}\frac{m!}{(m-i)!i_1!\cdots i_d!}c_1^{i_1}\cdots c_d^{i_d}M_0^{m-i}M_1^{i_1}\cdots M_d^{i_d}\,,
\end{aligned} 
\end{equation*}
we have
\begin{equation}\label{m1}
0=\cL(f^m)=\cL(M_0^m)+\sum_{i=1}^m\sum_{i_1+\cdots+i_d=i}\frac{m!}{(m-i)!i_1!\cdots i_d!}c_1^{i_1}\cdots c_d^{i_d}\cL(M_0^{m-i}M_1^{i_1}\cdots M_d^{i_d})
\end{equation}
Let us note that, by our assumption about $M_0$, $mk_1+1$ does not divide $\cL(M_0^m)$ in $\Z$ if $mk_1+1$ is prime.  Also, by our assumptions about $M_1,\ldots,M_d$, $mk_1+1$ does divide each of the terms $\cL(M_0^{m-i}M_1^{i_1}\cdots M_d^{i_d})$ appearing in (\ref{m1}).  Using Dirichlet's prime number theorem\footnote{which asserts that for any two positive coprime integers $a$ and $b$, there are infinitely many primes of the form $a + nb$, where $n\ge0$. See Theorem 66 and Corollary 4.1 in \cite{F-T}.} we can select a prime number $p$ of the form $mk_1+1$ for which $\mathcal O$ has a valuation $v_p$ over $p$.  Viewing (\ref{m1}) as an equation in $\mathcal O$, we see that $v_p$ takes on positive values at each summand $\cL(M_0^{m-i}M_1^{i_1}\cdots M_d^{i_d})$.  For the first term, however, we have $\cL(M_0^m)=k_1!\cdots k_n!$, which is not divisible by $p$ in $\Z$ by our assumption, and hence $v_p(\cL(M_0^m))=0$.  This gives a contradiction, since the sum is 0.
\end{proof}

\begin{propo}\label{formpower} The Factorial Conjecture holds for $f\in\C[U_1,\ldots,U_n]$ a power of a linear homogenous form.  
\end{propo}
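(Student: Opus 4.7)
Write $f=L^k$ where $L=a_1U_1+\cdots+a_nU_n$; since $f=0$ is equivalent to $L=0$, I will assume $L\ne 0$ and seek a contradiction. A direct multinomial expansion, combined with $\cL(U_1^{\ell_1}\cdots U_n^{\ell_n})=\ell_1!\cdots\ell_n!$, produces the clean closed form
$$\cL(L^N)=N!\,h_N(a_1,\ldots,a_n),$$
where $h_N$ denotes the $N$-th complete homogeneous symmetric polynomial. Equivalently, the exponential generating series is the rational function
$$\sum_{N\ge 0}\cL(L^N)\,\frac{t^N}{N!} \;=\; \frac{1}{G(t)}, \qquad G(t)=\prod_{i=1}^n(1-a_it),$$
so the hypothesis $\cL(f^m)=\cL(L^{km})=0$ for all $m\ge 1$ becomes $[t^{km}]\bigl(1/G(t)\bigr)=0$ for all $m\ge 1$.

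Next apply a roots-of-unity filter: with $\zeta\in\C$ a primitive $k$-th root of unity, one has, as rational functions of $t$,
$$\sum_{j=0}^{k-1}\frac{1}{G(\zeta^j t)} \;=\; k\sum_{m\ge 0}\bigl[t^{km}\bigr]\bigl(1/G(t)\bigr)\,t^{km} \;=\; k.$$
From the classical identity $\prod_{j=0}^{k-1}(1-a\zeta^j t)=1-a^k t^k$ one obtains $\prod_{j=0}^{k-1}G(\zeta^j t)=\prod_{i=1}^n(1-a_i^k t^k)$, and clearing denominators yields the polynomial identity
$$\sum_{j=0}^{k-1}\prod_{j'\ne j}G(\zeta^{j'}t) \;=\; k\prod_{i=1}^n(1-a_i^k t^k).$$

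The conclusion follows by induction on $n$. Suppose, at the current stage, every $a_i$ is nonzero; then each $G(\zeta^j t)$ has degree exactly $n$ in $t$, so the left-hand side of the displayed identity has degree at most $n(k-1)$, while the right-hand side has degree $nk$ with leading coefficient $k(-1)^n\prod_i a_i^k\ne 0$. Since $nk>n(k-1)$ for $n\ge 1$, this contradicts the identity, so some $a_i$ must vanish. After dropping the corresponding variable the hypothesis and the map $\cL$ descend verbatim to $n-1$ variables (the rule $\cL(U^\ell)=\ell!$ treats each variable independently), and we iterate; the induction terminates at $n=1$, where $\cL(L^{km})=a_1^{km}(km)!=0$ already forces $a_1=0$. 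The only real content is the recognition that the exponential generating series of $\cL(L^N)$ is the rational function $1/G(t)$; after that, the roots-of-unity filter together with a one-line degree count finishes the proof, and no step looks like a genuine obstacle.
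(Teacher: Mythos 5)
Your proof is correct, and it takes a genuinely different route at the crucial step. Both you and the paper first compute $\cL(L^N)=N!\,h_N(a_1,\ldots,a_n)$ and package this via the generating identity $\sum_{N\ge0}h_N t^N = 1/\prod_i(1-a_it)$, translating the hypothesis into the vanishing of every $k$-th coefficient of $1/G(t)$. At that point the paper invokes Theorem~\ref{gap} (N.\ Mohan Kumar's lemma), whose proof goes through tensoring $\C[T]\subset\C[[T]]$ with the field $K=\C(T^r)$, identifying Laurent-series rings, and a common-denominator argument, and then concludes $g=0$ via the fact that the elementary symmetric polynomials vanish simultaneously only at the origin. You instead hit the power series with a roots-of-unity filter, which converts the coefficient-vanishing hypothesis into the polynomial identity
$$\sum_{j=0}^{k-1}\prod_{j'\ne j}G(\zeta^{j'}t)\;=\;k\prod_{i=1}^n(1-a_i^kt^k),$$
and then a one-line degree count ($n(k-1)<nk$ with a nonzero leading coefficient on the right) forces all $a_i=0$. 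This is more elementary and more self-contained: it dispenses with Theorem~\ref{gap} altogether and also bypasses the symmetric-function argument at the end. (In fact your argument, run on an arbitrary $S(T)=\prod(1-c_iT)$, gives an alternative, shorter proof of Theorem~\ref{gap} itself.) Two small remarks: the final induction on $n$ is unnecessary, since after discarding the indices with $a_i=0$ the same degree count applies directly and forces there to be no nonzero $a_i$ left; and it is worth saying explicitly that the equality $\sum_j 1/G(\zeta^jt)=k$ holds first as formal power series, hence as rational functions because the common denominator $\prod_jG(\zeta^jt)$ is a unit in $\C[[t]]$, which is what licenses clearing denominators to a polynomial identity.
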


\begin{proof}  We have $f=g^r$ where $g=\sum_{i=1}^nc_iU_i$.  We concern ourselves with $g$ for a moment.   For $m>0$ an integer we have $g^m=\sum_{i_1+\cdots+i_n=m}\binom{m}{i_1,\ldots,i_n}c_1^{i_1}\cdots c_n^{i_n}U_1^{i_1}\cdots U_n^{i_n}$.  Thus $\cL(g^m)=\sum_{i_1+\cdots+i_n=m}\frac{m!}{i_1!\cdots i_m!}c_1^{i_1}\cdots c_n^{i_n}i_1!\cdots i_m!=m!\sum_{i_1+\cdots+i_n=m}c_1^{i_1}\cdots c_n^{i_n}$.  Let us denote by $h_m$ the polynomial $\sum_{i_1+\cdots+i_n=m}c_1^{i_1}\cdots c_n^{i_n}$, viewing $c_1,\ldots,c_n$ as indeterminates for the moment.  

The polynomials $h_1,h_2,\ldots\in\C[c_1,\ldots,c_n]$ are related to the elementary symmetric polynomials $s_1,\ldots,s_n$ (where $s_m=\sum_{1\leq i_1<\cdots<i_m\leq n}c_{i_1}\cdots c_{i_m}$) in the following way:  Let $T$ be an indeterminate, and set $S(T)=\prod_{i=1}^n(1-c_iT)=1-s_1T+s_2T^2-\cdots+(-1)^ns_nT^n$.  In $\C[c_1,\ldots,c_n][[T]]$ we have $S(T)^{-1}=\prod_{i=1}^n\frac{1}{(1-c_iT)}=\prod_{i=1}^n(1+c_iT+c_i^2T^2+\cdots)=1+h_1T+h_2T^2+\cdots$, and we let $P(T)$ be the latter power series.  Now we specialize to $c_1,\ldots,c_n\in\C$ and view $S(T)$ and $P(T)$ as elements of $\C[T]$, $\C[[T]]$, respectively.

Returning to $f=g^r$,  we see that our hypotheses $\cL(f^m)=0$ for $m\ge1$ says that $h_{mr}=0$ for $m\ge1$.  By Theorem \ref{gap}, we must have $S(T)=1$, i.e., $s_1,\ldots,s_n$ vanish at $(c_1,\ldots,c_n)$.  It is well-known (and easily seen) that the only zero of $s_1,\ldots,s_n$ is $(0,\ldots,0)$, so we must have $g=0$.
\end{proof}

\begin{rema} In the case where $f$ itself is a linear form one can easily see from the proof that, more strongly, $\cL(f)=\cL(f^2)=\cdots=\cL(f^n)=0$ implies $f=0$.
\end{rema}

\begin{theo}[N. Mohan Kumar]\label{gap}  Let $S(T)\in\C[T]$ with constant term $1$, and let $P(T)=1+a_1T+a_2T^2+\cdots$ be it's multiplicative inverse in the power series ring $\C[[T]]$. If there exists an integer $r>0$ such that $a_{mr}=0$ for all $m\ge1$, then $S(T)=1$.
\end{theo}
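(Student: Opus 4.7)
The plan is to use a roots-of-unity sieve to turn the hypothesis $a_{mr} = 0$ (for $m \geq 1$) into a polynomial identity, then derive a contradiction by a degree count.

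First I would set $\zeta = e^{2\pi i/r}$ and exploit the orthogonality relation $\sum_{s=0}^{r-1} \zeta^{sk} = r$ when $r \mid k$ and $0$ otherwise. Applying this termwise to the power series $P(T)$ yields
\begin{equation*}
\sum_{s=0}^{r-1} P(\zeta^s T) \;=\; \sum_{k \geq 0} a_k T^k \sum_{s=0}^{r-1} \zeta^{sk} \;=\; r \sum_{m \geq 0} a_{mr} T^{mr} \;=\; r,
\end{equation*}
the last equality using $a_0 = 1$ together with the hypothesis $a_{mr} = 0$ for $m \geq 1$. Since $P(T) = 1/S(T)$ is a rational function, this identity of formal power series is in fact an identity of rational functions:
\begin{equation*}
\sum_{s=0}^{r-1} \frac{1}{S(\zeta^s T)} \;=\; r.
\end{equation*}

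Next I would clear denominators, multiplying through by $\prod_{s=0}^{r-1} S(\zeta^s T)$ to obtain the polynomial identity
\begin{equation*}
\sum_{s=0}^{r-1} \prod_{t \neq s} S(\zeta^t T) \;=\; r \prod_{s=0}^{r-1} S(\zeta^s T).
\end{equation*}
Now I would compare degrees, setting $n = \deg S$. Each summand on the left is a product of $r-1$ polynomials of degree $n$, so the left side has degree at most $n(r-1)$. The right side is $r$ times a product of $r$ polynomials of degree $n$, so it has degree exactly $nr$, with leading coefficient $r \cdot \mathrm{lc}(S)^r \cdot \zeta^{n r(r-1)/2}$, which is a nonzero scalar times a root of unity, hence nonzero.

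If $n \geq 1$, then $nr > n(r-1)$, contradicting the identity. Therefore $n = 0$, and since $S$ has constant term $1$, we conclude $S(T) = 1$. The only step requiring any vigilance is confirming that the leading coefficient on the right truly does not vanish, but this is immediate since none of the factors $r$, $\mathrm{lc}(S)$, or $\zeta^{n r(r-1)/2}$ is zero; there is no real obstacle in the argument.
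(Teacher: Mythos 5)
Your proof is correct, and it follows a genuinely different route from the paper's. The paper decomposes $P(T)$ and $S(T)$ along the $\{1,T,\dots,T^{r-1}\}$-basis of $\C[[T]]$ over $\C[[T^r]]$ and of $\C[T]$ over $\C[T^r]$, observes from the hypothesis that the degree-$0$ component of $P$ is just $1$, then tensors up to $\C(T^r)$ to conclude that all components of $P$ are rational functions of $T^r$; a common-denominator argument then forces $\deg S = 0$. Your argument replaces this module-theoretic setup with the classical root-of-unity multisection
\[
\sum_{s=0}^{r-1} P(\zeta^s T) = r\sum_{m\ge 0} a_{mr}T^{mr} = r,
\]
valid in $\C[[T]]$, and then clears denominators to get a polynomial identity on which a leading-coefficient degree count finishes the job. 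The check that the leading coefficient $r\cdot\mathrm{lc}(S)^r\cdot\zeta^{nr(r-1)/2}$ is nonzero is exactly the right thing to verify, and you did. Your approach is more elementary (no base change to $\C(T^r)$, no Laurent series), is arguably shorter, and makes visible that the whole content of the hypothesis is the vanishing of the $r$-section of $P$; the paper's approach has the advantage of not invoking roots of unity and fitting more directly into its ring-theoretic framework, but both are complete and correct.
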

\begin{proof} We note that $\C[[T]]$ is a free module over $B=\C[[T^r]]$ with basis $\{1,T,\cdots,T^{r-1}\}$, and that $\C[T]$ is free over $A=\C[T^r]$ with the same basis.   Accordingly, we write $P(T)=B_0 +B_1T+\cdots+B_{r-1}T^{r-1}$ and $S(T)=A_0+A_1T+\cdots+A_{r-1}T^{r-1}$ with $B_0,\ldots,B_{r-1}\in B$, and $A_0,\ldots,A_{r-1}\in A$.  Our assumption about $P(T)$ clearly shows $B_0=1$, since the constant term is the only power of $T^r$ that has non-zero coefficient.  Now we tensor $\C[T]$ and $\C[[T]]$ with the rational function field $K=\C(T^r)$, which is the field of fractions of $A$.  This gives the containment $\C[T]\otimes_{A}K\subset\C[[T]]\otimes_{A}K$.  The first ring is the field $\C(T)$ (since $T$ is algebraic over $\C(T^r)$), which is free over $K=\C(T^r)$ with basis $\{1,T,\cdots,T^{r-1}\}$; the second ring is the field of Laurent power series ring $\C[[T]][T^{-1}]$, which is free with the same basis over $L=\C[[T^r]]\otimes_{A}K=\C[[T^r]][T^{-r}]$, which is the field of Laurent power series in $T^r$.  So we have:
$$\begin{matrix}
S(T)&=&A_0+A_1T+\cdots+A_{r-1}T^{r-1}&{}&1 +B_1T+\cdots+B_{r-1}T^{r-1}&=&P(T)\\
{}&{}&\begin{turn}{-90}\hskip-7pt$\in$\end{turn}&{}&\begin{turn}{-90}\hskip-7pt$\in$\end{turn}\\
{}&{}&A\oplus AT\oplus\cdots\oplus AT^{r-1}&\subset& B\oplus BT\oplus\cdots\oplus BT^{r-1}\\
{}&{}&\begin{turn}{-90}\hskip-7pt$\subset$\end{turn}&{}&\begin{turn}{-90}\hskip-7pt$\subset$\end{turn}\\
\C(T)&=&K\oplus KT\oplus\cdots\oplus KT^{r-1}&\subset& L\oplus LT\oplus\cdots\oplus LT^{r-1}
\end{matrix}$$
Since $S(T)$ lies in the field $\C(T)=K\oplus KT\oplus\cdots\oplus KT^{r-1}$, so must its inverse $P(T)$, and this shows that $B_1,\ldots,B_{r-1}$ lie in $K=\C(T^r)$.  Let $Q\in\C[T^r]$ be a common denominator for $B_1,\ldots,B_{r-1}$ as rational functions in $T^r$.  Then $$Q=QP(T)S(T)=(Q+QB_1T+\cdots+QB_{r-1}T^{r-1})S(T)\,.$$  Since $Q,QB_1,\ldots,QB_{r-1}$ all lie in $\C[T^r]$ there is no cancellation amongst summands of $Q+QB_1T+\cdots+QB_{r-1}T^{r-1}$.  Hence its degree is at least the degree of $Q$.  This shows the degree of $S(T)$ is zero, i.e., $S(T)=1$, as desired.
\end{proof}

We have not succeeded in proving that the Factorial Conjecture holds for more general homogeneous polynomials, except in a few situations given below.

\begin{propo} The Factorial Conjecture holds for $f\in\C[U_1,U_2]$ a quadratic homogenous form in two variables.
\end{propo}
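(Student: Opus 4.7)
The plan is to treat $f$ as a general quadratic form $f = aU_1^2 + bU_1U_2 + cU_2^2$ with coefficients $a,b,c \in \C$ and to show that already the first three conditions $\cL(f) = \cL(f^2) = \cL(f^3) = 0$ force $a=b=c=0$. First, direct evaluation gives $\cL(f) = 2a + b + 2c$, so the assumption $\cL(f) = 0$ eliminates $b$ via $b = -2(a+c)$.

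Next I will expand $f^2 = a^2U_1^4 + 2abU_1^3U_2 + (b^2+2ac)U_1^2U_2^2 + 2bcU_1U_2^3 + c^2U_2^4$, apply $\cL$ termwise via $\cL(U_1^i U_2^j) = i!\,j!$, and substitute $b = -2(a+c)$. I expect the result to simplify to $\cL(f^2) = 8(2a^2 - ac + 2c^2)$, so $\cL(f^2) = 0$ yields the quadratic relation $2a^2 - ac + 2c^2 = 0$.

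The third step will be the analogous computation for $f^3$. The multinomial expansion produces ten terms indexed by triples $(i,j,k)$ with $i+j+k=3$, each contributing $\binom{3}{i,j,k}\, a^i b^j c^k\,(2i+j)!\,(j+2k)!$ to $\cL(f^3)$. After substituting $b = -2(a+c)$ and collecting in $a$ and $c$, I anticipate the clean factorization $\cL(f^3) = 144(a+c)(2a^2 - 3ac + 2c^2)$. Using the relation from the previous step to rewrite $2a^2 - 3ac + 2c^2 = (2a^2 - ac + 2c^2) - 2ac = -2ac$, this collapses to $\cL(f^3) = -288\,ac(a+c)$, so $\cL(f^3) = 0$ forces $ac(a+c) = 0$.

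In each of the three subcases $a=0$, $c=0$, or $a+c=0$, substituting back into $2a^2 - ac + 2c^2 = 0$ gives $a = c = 0$, and then $b = -2(a+c) = 0$ as well, so $f = 0$. The main obstacle is the bookkeeping for the ten-term expansion of $\cL(f^3)$ and the verification of the factorization $144(a+c)(2a^2-3ac+2c^2)$; beyond that the argument is entirely elementary, and, notably, does not invoke the deeper machinery employed elsewhere in this section (Dirichlet's theorem, the $\mathcal O$-valuations of Remark~\ref{valuation}, or the gap Theorem~\ref{gap}).
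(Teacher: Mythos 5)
Your computation is correct: one can verify that $\cL(f) = 2a+b+2c$, that substituting $b=-2(a+c)$ gives $\cL(f^2)=8(2a^2-ac+2c^2)$, and that the ten-term expansion of $\cL(f^3)$ collapses to $144(2a^3-a^2c-ac^2+2c^3)=144(a+c)(2a^2-3ac+2c^2)$, whence the identity $2a^2-3ac+2c^2 = (2a^2-ac+2c^2)-2ac$ reduces this to $-288\,ac(a+c)$ under the hypothesis $\cL(f^2)=0$. The three resulting subcases $a=0$, $c=0$, $a+c=0$ each force $2a^2-ac+2c^2$ to become $2c^2$, $2a^2$, or $5a^2$ respectively, so $a=b=c=0$ in every case.

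The paper takes a genuinely different route. It invokes the algebraic reduction (Remark~\ref{algred}) and the valuation-extension device (Remark~\ref{valuation}), chooses odd primes $p=2r+1$, evaluates $\cL(f^{2r})\bmod p$, and uses Wilson's Theorem (Lemma~\ref{congruence}) to show that $\cL(f^{2r})\equiv(c_{11}^2-4c_{20}c_{02})^r\pmod p$; since this holds for infinitely many valuations, the discriminant must vanish, so $f$ is a square of a linear form and one falls back on Proposition~\ref{formpower}, which itself rests on the gap theorem (Theorem~\ref{gap}). Your proof is thus strictly more elementary — it uses nothing beyond direct expansion and bypasses the Dirichlet/valuation/Wilson machinery as well as the reduction to Proposition~\ref{formpower} — and it establishes the quantitatively stronger statement that $\cL(f)=\cL(f^2)=\cL(f^3)=0$ already forces $f=0$, in the spirit of the remark following Proposition~\ref{formpower}. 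What the paper's method buys in exchange is a technique (mod-$p$ congruences detecting the discriminant, then descent to the linear-form case) that at least has the shape of a strategy for higher degree or more variables, whereas your explicit-coefficient computation is tailored to the binary quadratic case and would not obviously scale.
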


\begin{proof} Writing $f=c_{20}U_1^2+c_{11}U_1U_2+c_{02}U_2^2$ we have
$$
f^m=\sum_{i+j+k=m}\frac{m!}{i!j!k!}c_{20}^ic_{11}^jc_{02}^kU_1^{2i+j}U_2^{j+2k}
$$
so that

\begin{align}\cL(f^m)&=\sum_{i+j+k=m}\frac{m!}{i!j!k!}c_{20}^ic_{11}^jc_{02}^k(2i+j)!(j+2k)!\notag\\
&=\sum_{0\le i+k\le m}\frac{m!}{i!(m-i-k)!k!}c_{20}^ic_{02}^kc_{11}^{m-i-k}(m+i-k)!(m-i+k)!=0\label{otherterms}\,.
\end{align}

Let $M$ be the sum of the terms in above where $k=i$, i.e.,
$$
M=\sum_{0\le2i\le m}\frac{m!}{(i!)^2(m-2i)!}(m!)^2(c_{20}c_{02})^ic_{11}^{m-2i}\,.
$$
By the integrality reduction (Remark \ref{algred}) we can assume $c_{20},c_{11},c_{02}$ lie in a ring $\mathcal O$ which is Dedekind and integral over $\Z[1/\ell]$ for some $\ell\in\Z$, $\ell\ne0$.  Let $p=2r+1\in\Z$ be an odd prime which corresponds to a valuation in $\mathcal O$, and consider the above equations with $m=2r$.  Let us note that $p$ divides all of the summands of (\ref{otherterms}) except those comprised by $M$, i.e., those for which $k\ne i$ (for if, say, $i>k$, then $p\,|\,(m+i-k)!\,$).  Thus we have $\cL(f^m)\equiv M \mod p$.  From Lemma \ref{congruence} below we get
$$
0\equiv M\equiv\sum_{i=0}^r\binom{r}{i}c_{11}^{2r-2i}(-4c_{20}c_{02})^i=(c_{11}^2-4c_{20}c_{02})^r\mod p
$$
Hence $p$ divides $(c_{11}^2-4c_{20}c_{02})^r$ in $\mathcal O$.   This shows that $d=c_{11}^2-4c_{20}c_{02}$ has a positive valuation for infinitely many valuations of $\mathcal O$, which shows that $d=0$.  Since $d$ is the discriminant of  $f$, we conclude that $f$ is the square of a linear form in $\C[U_1,U_2]$, so we are in the situation of Proposition \ref{formpower}, and the proof is complete, modulo the lemma below.
\end{proof}

\begin{lemma}\label{congruence} For $p=2r+1\in \Z$ an odd prime, we have, setting $m=2r$,
$$\frac{(m!)^3}{(i!)^2(m-2i)!}\equiv\binom{r}{i}(-4)^i\mod p$$
for $0\le i\le r$.
\end{lemma}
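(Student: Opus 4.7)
The plan is to reduce both sides modulo $p=2r+1$ to a common quantity, using Wilson's theorem plus the "reflection" $p-k\equiv -k \pmod p$.

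First I would rewrite the left-hand side using $m=p-1$. By Wilson's theorem $(m!)^3 = ((p-1)!)^3 \equiv (-1)^3 = -1 \pmod p$. Next, since $0 \le i \le r$ we have $2i \le 2r = p-1$, so I can peel off factors from the top of $(p-1)!$:
\[
(p-1)! \;=\; (p-1)(p-2)\cdots(p-2i)\cdot(p-1-2i)!,
\]
and since $p-k\equiv -k\pmod p$, the product $(p-1)(p-2)\cdots(p-2i)\equiv (-1)^{2i}(2i)! = (2i)!\pmod p$. Hence $(m-2i)! = (p-1-2i)! \equiv -1/(2i)!\pmod p$. Combining these two observations gives
\[
\frac{(m!)^3}{(i!)^2(m-2i)!} \;\equiv\; \frac{-1}{(i!)^2 \cdot (-1/(2i)!)} \;=\; \frac{(2i)!}{(i!)^2} \;=\; \binom{2i}{i}\pmod p.
\]

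Second, I would show the right-hand side reduces to the same central binomial coefficient. Writing $\binom{r}{i}(-4)^i = (-4)^i \, r(r-1)\cdots(r-i+1)/i!$, the idea is that modulo $p=2r+1$ we have $2r\equiv -1$, so $r\equiv -1/2$ and more generally $r-j \equiv -(2j+1)/2\pmod p$ for $0\le j\le i-1$. Therefore
\[
r(r-1)\cdots(r-i+1) \;\equiv\; \prod_{j=0}^{i-1}\frac{-(2j+1)}{2} \;=\; \frac{(-1)^i (2i-1)!!}{2^i}\pmod p.
\]
Multiplying by $(-4)^i/i!$ and using $(-4)^i(-1)^i/2^i = 2^i$ together with the identity $2^i(2i-1)!! = (2i)!/i!$ yields
\[
\binom{r}{i}(-4)^i \;\equiv\; \frac{(2i)!}{(i!)^2} \;=\; \binom{2i}{i}\pmod p,
\]
which matches the left-hand side and completes the proof.

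The step I expect to require the most care is the reflection argument in the second part: one must verify that the inversions $2^{-1}$ appearing in $r\equiv -1/2$ genuinely cancel against the $(-4)^i = (-1)^i 2^{2i}$ factor in $\binom{r}{i}(-4)^i$, which is why the factor $(-4)^i$ (rather than some other power of $2$) is the correct one. Beyond this bookkeeping, everything is a routine application of Wilson's theorem and reduction $\bmod\, p$; the hypothesis $i\le r$ is used only to ensure that $2i\le p-1$ so that none of the factorials involved vanishes modulo $p$.
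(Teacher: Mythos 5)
Your proof is correct, but it is organized differently from the paper's. The paper proves the congruence by one direct chain: after using Wilson to reduce the claim to $i!(2r-2i)!\,\frac{r!}{(r-i)!}(-4)^i\equiv-1\pmod p$, it absorbs the $2^i$ into the falling factorial via $r(r-1)\cdots(r-i+1)\cdot 2^i=(2r)(2r-2)\cdots(2r-2i+2)$, then combines with $(2r-2i)!$ to produce $(2r)!$ over a product of odd numbers $(p-2)(p-4)\cdots(p-2i)$, and finishes with Wilson and the reflection $p-k\equiv-k$. This deliberately stays with integer products and never introduces inverses of $2$. Your approach instead decomposes the problem symmetrically: you show that \emph{each side} of the stated congruence reduces to the central binomial coefficient $\binom{2i}{i}\bmod p$, using Wilson plus the reflection on the left, and the observation $r\equiv -\tfrac12\pmod p$ (so $r-j\equiv-(2j+1)/2$) together with the double-factorial identity $2^i(2i-1)!!=(2i)!/i!$ on the right. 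This is a genuinely different arrangement: it costs you a brief excursion into modular inverses of $2$ (harmless, since $p$ is odd), but it buys you the clean structural observation that both sides are $\binom{2i}{i}$ modulo $p$, which the paper's single-chain computation never surfaces. Both arguments rely on $i\le r$ in the same way — to keep $2i\le p-1$ so that no factorial in sight vanishes mod $p$ — and both are correct and of comparable length.
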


\begin{proof}  We have $m!\equiv-1\mod p$ by Wilson's Theorem,\footnote{Wilson's Theorem: An integer $n>1$ is prime if and only if $(n-1)!\equiv-1\mod n$.  See \cite{Wik} for a very nice survey on this.} so it remains to prove that
\begin{equation}\label{congruence2}
i!(2r-2i)!\frac{r!}{(r-i)!}(-4)^i\equiv-1\mod p\,.
\end{equation}
To see this, we begin with the expression on the left:
\begin{align*}
i!(2r-2i)!\frac{r!}{(r-i)!}(-4)^i&=i!(2r-2i)!r(r-1)\cdots(r-i+1)2^i(-2)^i\\
&=i!(2r-2i)!2r(2r-2)\cdots(2r-2i+2)(-2)^i\\
&=\frac{i!(2r)!}{(2r-1)(2r-3)\cdots(2r-2i+1)}(-2)^i\\
&=\frac{i!(p-1)!}{(p-2)(p-4)\cdots(p-2i)}(-2)^i\\
&\equiv\frac{i!(-1)}{(-2)(-4)\cdots(-2i)}(-2)^i\\
\intertext{(going mod $p$ and again appealing to Wilson's Theorem)}
&\equiv\frac{i!(-1)}{(i!)(-2)^i}(-2)^i\equiv-1\,.
\end{align*}
\end{proof}

\begin{propo}\label{powersum} The Factorial Conjecture holds for $f\in\C[U_1,\ldots,U_n]$ of the form $c_1U_1^d+\cdots+c_nU_n^d$ where $d\ge1$.
\end{propo}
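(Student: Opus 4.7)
The plan is to reformulate the hypothesis as a formal power series identity, take logarithms to extract the power sums of the $c_j$'s, and close via Newton's identities.

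Because $\cL$ factors across the variables $U_j$ and the summands $c_j U_j^d$ of $f$ live in disjoint variables, a short multinomial computation gives
\[
\sum_{m\ge 0}\frac{\cL(f^m)}{m!}\,t^m \;=\; \prod_{j=1}^{n} \Phi(c_j t), \qquad\text{where}\qquad \Phi(s):=\sum_{k\ge 0}\frac{(dk)!}{k!}\,s^k.
\]
Thus the hypothesis $\cL(f^m)=0$ for all $m\ge 1$ is equivalent to $\prod_j \Phi(c_j t)=1$ in $\C[[t]]$. Taking a formal logarithm and writing $\log\Phi(s)=\sum_{k\ge 1}\lambda_k s^k$, this becomes $\sum_{k\ge 1}\lambda_k p_k t^k=0$ with $p_k=c_1^k+\cdots+c_n^k$; equivalently, $\lambda_k p_k=0$ for every $k\ge 1$.

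Granting that $\lambda_k\ne 0$ for $k=1,\ldots,n$, we conclude $p_1=\cdots=p_n=0$; Newton's identities then force the elementary symmetric polynomials $e_1,\ldots,e_n$ of $c_1,\ldots,c_n$ to vanish, so $c_1=\cdots=c_n=0$.

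The main obstacle is the nonvanishing of the $\lambda_k$. Observe that $\Phi(s)$ is the moment generating function of the positive random variable $W=V^d$, where $V\sim\mathrm{Exp}(1)$: its density on $[0,\infty)$ is $\frac{1}{d}w^{1/d-1}e^{-w^{1/d}}$. Hence $\lambda_k=\kappa_k/k!$ where $\kappa_k$ is the $k$-th cumulant of $W$. Since $0<1/d\le 1$, the factor $e^{-w^{1/d}}$ is completely monotone, so by a theorem of Bondesson $W$ is infinitely divisible with L\'evy measure $\nu$ supported on $(0,\infty)$, and the L\'evy--Khintchine formula yields $\kappa_k=\int_0^\infty x^k\,d\nu(x)>0$ for every $k\ge 1$. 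A more elementary alternative would be an inductive analysis of the recursion $k\lambda_k=k a_k-\sum_{j=1}^{k-1}j\lambda_j a_{k-j}$ (with $a_k=(dk)!/k!$), exploiting that the ratios $a_j a_{k-j}/a_k=\binom{k}{j}/\binom{dk}{dj}$ are small enough to keep the sum below $ka_k$ and hence force $\lambda_k>0$.
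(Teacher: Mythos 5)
Your approach is correct and genuinely different from the paper's. The paper's proof is arithmetic: after the algebraic reduction of Remark~\ref{algred}, it evaluates $\cL(f^{nm})$ as a sum over compositions $k_1+\cdots+k_n=nm$, observes that $p=(m+1)d-1$ (prime for infinitely many $m$ by Dirichlet) divides every coefficient $\prod_i (k_id)!/k_i!$ except the balanced one $k_1=\cdots=k_n=m$, and concludes via valuations on a Dedekind overring that $c_1\cdots c_n=0$. You instead package all conditions $\cL(f^m)=0$ at once into the exponential generating identity $\sum_{m\ge0}\cL(f^m)t^m/m!=\prod_j\Phi(c_jt)$ (which holds precisely because $\cL$ factors across the disjoint variables $U_j$), take a formal logarithm to reduce to $\lambda_k p_k=0$ for all $k\ge1$, and finish with Newton's identities once $\lambda_1,\ldots,\lambda_n\ne0$; the positivity $\lambda_k>0$ is settled by recognizing $k!\lambda_k$ as the $k$-th cumulant of $W=V^d$, whose density $\tfrac1d w^{1/d-1}e^{-w^{1/d}}$ is completely monotone for $d\ge1$, so $W$ is a mixture of exponentials and hence infinitely divisible (Goldie--Steutel), with strictly positive cumulants. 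This is analytic where the paper is arithmetic, treats $d=1$ and $d\ge2$ uniformly (the paper defers $d=1$ to Proposition~\ref{formpower}), and the identity $\prod_j\Phi(c_jt)=1$ is arguably the cleaner structural explanation. Two small caveats worth flagging: for $d\ge2$ the series $\Phi$ has radius of convergence $0$, so ``moment generating function'' must be read formally --- harmless, since cumulants are universal polynomials in the moments and agree with the L\'evy--Khintchine ones computed from the convergent Laplace transform (and the drift term is $0$, as you implicitly use). And your elementary alternative via the log-derivative recursion closes for $d\ge2$ using $a_ja_{k-j}/a_k=\binom{k}{j}/\binom{dk}{dj}\le 1/\binom{k}{j}$ together with $\sum_{j=1}^{k-1}j/\binom{k}{j}=\tfrac{k}{2}\sum_{j=1}^{k-1}1/\binom{k}{j}<k$, but that bound fails for $d=1$, where one should simply note $\Phi(s)=1/(1-s)$ gives $\lambda_k=1/k$ directly; spelling that out would make the alternative self-contained.
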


\begin{proof} The case $d=1$ is covered in Proposition \ref{formpower}, so we assume $d\ge2$ and each of $c_1,\ldots,c_n$ is non-zero.  Here we only need to assume that $\cL(f^m)=0$ for $m\gg0$.  We consider the powers $f^{nm}$ of $f$:
$$f^{nm}=\sum_{k_1+\cdots+k_n=nm}\frac{(nm)!}{k_1!\cdots k_n!}c_1^{k_1}\cdots c_n^{k_n}U_1^{k_1d}\cdots U_n^{k_nd}\,,$$
which yields
\begin{align}\cL(f^{nm})&=\sum_{k_1+\cdots+k_n=nm}\frac{(nm)!}{k_1!\cdots k_n!}c_1^{k_1}\cdots c_n^{k_n}(k_1d)!\cdots(k_nd)! \notag\\
&=(nm)! \sum_{k_1+\cdots+k_n=nm}\frac{(k_1d)!}{k_1!}\cdots\frac{(k_nd)!}{k_n!}c_1^{k_1}\cdots c_n^{k_n}\label{factsum}
\end{align}
One term of (\ref{factsum}), we'll call it the special term, occurs when $k_1=\cdots=k_n=m$.  For all other summands we have $k_i>m$ for some $i$ (since $\sum k_i=nm$), and we now examine one of these other summands.  Without loss of generality, suppose $k_1>m$ and write
\begin{align}
\frac{(k_1d)!}{k_1!}&=\frac{k_1d}{k_1}(k_1d-1)\cdots(k_1d-d+1)\frac{(k_1-1)d}{k_1-1}(k_1d-d-1)\cdots\notag\\
&\qquad\qquad\quad\cdots(2d+1)\frac{2d}{2}(2d-1)\cdots(d+1)\frac{1d}{d}(d-1)\cdots1\,.\notag
\end{align}
From this one easily sees that $\frac{(k_1d)!}{k_1!}$ is an integer divisible by $p=(m+1)d-1$, which, by Dirichlet's prime number theorem, is prime for infinitely many values of $m$.  As in previous arguments, we apply the algebraic reduction (Remark \ref{algred}) and let $\mathcal O$ be the Dedekind ring chosen as in Remark \ref{valuation}. For all but finitely many such $p$, $\mathcal O$ has a valuation $v_p$ lying over $p$.  The above observation shows then shows that $v_p$ is positive at all the terms of (\ref{factsum}) except the special term, and since $\cL(f^{nm})=0$ it must be positive at the special term as well.  Since $p=(m+1)d-1$ does not divide $\frac{(k_1d)!}{k_1!}\cdots\frac{(k_nd)!}{k_n!}$ when $k_1=\cdots=k_n=m$, we must have $v_p(c_1^m\cdots c_n^m)=0$, and since this holds for infinitely many valuations of $\mathcal O$, we conclude $c_1^m\cdots c_n^m=0$.  Therefore $c_i=0$ for some $i$, contradicting our assumption.
\end{proof}

\bibliographystyle{amsplain}
\bibliography{Refs}

\vskip 15pt
\noindent{\small \sc Department of Mathematics, Radboud University,
Nijmegen, The Netherlands } {\em E-mail}: essen@math.ru.nl

\noindent{\small \sc Department of Mathematics, Washington University in St.
Louis, St. Louis, MO 63130 } {\em E-mail}: wright@math.wustl.edu

\noindent{\small \sc Department of Mathematics, Illinois State University, Normal, IL 61790 } {\em E-mail}: wzhao@ilstu.edu

\end{document}